\documentclass[11pt]{amsart}

\usepackage[all]{xy}
\usepackage{color, pstricks}
\usepackage{mathrsfs}
\usepackage{amsmath,amsthm, amssymb, amsgen,amsxtra,amsfonts, amsbsy} 
\usepackage[all]{xy}
\usepackage{verbatim}
\usepackage{url}
\usepackage{hyperref}

\begin{document}
%
%
%
\theoremstyle{definition}
\newtheorem{Definition}{Definition}[section]
\newtheorem*{Definitionx}{Definition}
\newtheorem{Convention}[Definition]{Convention}
\newtheorem{Construction}{Construction}[section]
\newtheorem{Example}[Definition]{Example}
\newtheorem{Examples}[Definition]{Examples}
\newtheorem{Remark}[Definition]{Remark}
\newtheorem{Setup}[Definition]{Setup}
\newtheorem*{Remarkx}{Remark}
\newtheorem{Remarks}[Definition]{Remarks}
\newtheorem{Caution}[Definition]{Caution}
\newtheorem{Conjecture}[Definition]{Conjecture}
\newtheorem*{Conjecturex}{Conjecture}
\newtheorem{Question}[Definition]{Question}
\newtheorem{Questions}[Definition]{Questions}
\newtheorem*{Acknowledgements}{Acknowledgements}
\newtheorem*{Organization}{Organization}
\newtheorem*{Disclaimer}{Disclaimer}
\theoremstyle{plain}
\newtheorem{Theorem}[Definition]{Theorem}
\newtheorem*{Theoremx}{Theorem}
\newtheorem{Theoremy}{Theorem}
\newtheorem{Proposition}[Definition]{Proposition}
\newtheorem*{Propositionx}{Proposition}
\newtheorem{Lemma}[Definition]{Lemma}
\newtheorem{Corollary}[Definition]{Corollary}
\newtheorem*{Corollaryx}{Corollary}
\newtheorem{Fact}[Definition]{Fact}
\newtheorem{Facts}[Definition]{Facts}
\newtheoremstyle{voiditstyle}{3pt}{3pt}{\itshape}{\parindent}%
{\bfseries}{.}{ }{\thmnote{#3}}%
\theoremstyle{voiditstyle}
\newtheorem*{VoidItalic}{}
\newtheoremstyle{voidromstyle}{3pt}{3pt}{\rm}{\parindent}%
{\bfseries}{.}{ }{\thmnote{#3}}%
\theoremstyle{voidromstyle}
\newtheorem*{VoidRoman}{}

%
\newcommand{\prf}{\par\noindent{\sc Proof.}\quad}
\newcommand{\blowup}{\rule[-3mm]{0mm}{0mm}}
\newcommand{\cal}{\mathcal}
\newcommand{\Aff}{{\mathds{A}}}
\newcommand{\BB}{{\mathds{B}}}
\newcommand{\CC}{{\mathds{C}}}
\newcommand{\EE}{{\mathds{E}}}
\newcommand{\FF}{{\mathds{F}}}
\newcommand{\GG}{{\mathds{G}}}
\newcommand{\HH}{{\mathds{H}}}
\newcommand{\NN}{{\mathds{N}}}
\newcommand{\ZZ}{{\mathds{Z}}}
\newcommand{\PP}{{\mathds{P}}}
\newcommand{\QQ}{{\mathds{Q}}}
\newcommand{\RR}{{\mathds{R}}}
\newcommand{\Sph}{{\mathds{S}}}
\newcommand{\TT}{{\mathds{T}}}
\newcommand{\Liea}{{\mathfrak a}}
\newcommand{\Lieb}{{\mathfrak b}}
\newcommand{\Lieg}{{\mathfrak g}}
\newcommand{\Liem}{{\mathfrak m}}
\newcommand{\ideala}{{\mathfrak a}}
\newcommand{\idealb}{{\mathfrak b}}
\newcommand{\idealg}{{\mathfrak g}}
\newcommand{\idealj}{{\mathfrak j}}
\newcommand{\idealm}{{\mathfrak m}}
\newcommand{\idealn}{{\mathfrak n}}
\newcommand{\idealp}{{\mathfrak p}}
\newcommand{\idealq}{{\mathfrak q}}
\newcommand{\idealI}{{\cal I}}
\newcommand{\lin}{\sim}
\newcommand{\num}{\equiv}
\newcommand{\dual}{\ast}
\newcommand{\iso}{\cong}
\newcommand{\homeo}{\approx}
\newcommand{\mathds}[1]{{\mathbb #1}}
\newcommand{\mm}{{\mathfrak m}}
\newcommand{\pp}{{\mathfrak p}}
\newcommand{\qq}{{\mathfrak q}}
\newcommand{\rr}{{\mathfrak r}}
\newcommand{\pP}{{\mathfrak P}}
\newcommand{\qQ}{{\mathfrak Q}}
\newcommand{\rR}{{\mathfrak R}}
%
%
\newcommand{\OO}{{\cal O}}
\newcommand{\calA}{{\cal A}}
\newcommand{\calD}{{\cal D}}
\newcommand{\calM}{{\cal M}}
\newcommand{\calO}{{\cal O}}
\newcommand{\calP}{{\cal P}}
\newcommand{\calT}{{\cal T}}
\newcommand{\calU}{{\cal U}}
\newcommand{\numero}{{n$^{\rm o}\:$}}
\newcommand{\mf}[1]{\mathfrak{#1}}
\newcommand{\mc}[1]{\mathcal{#1}}
\newcommand{\into}{{\hookrightarrow}}
\newcommand{\onto}{{\twoheadrightarrow}}
\newcommand{\Spec}{{\rm Spec}\:}
\newcommand{\BigSpec}{{\rm\bf Spec}\:}
\newcommand{\Spf}{{\rm Spf}\:}
\newcommand{\Proj}{{\rm Proj}\:}
\newcommand{\Pic}{{\rm Pic }}
\newcommand{\Picloc}{{\rm Picloc }}
\newcommand{\Br}{{\rm Br}}
\newcommand{\NS}{{\rm NS}}
\newcommand{\id}{{\rm id}}
\newcommand{\Sym}{{\mathfrak S}}
\newcommand{\Aut}{{\rm Aut}}
\newcommand{\Autp}{{\rm Aut}^p}
\newcommand{\End}{{\rm End}}
\newcommand{\Hom}{{\rm Hom}}
\newcommand{\Ext}{{\rm Ext}}
\newcommand{\ord}{{\rm ord}}
\newcommand{\coker}{{\rm coker}\,}
\newcommand{\divisor}{{\rm div}}
\newcommand{\Def}{{\rm Def}}
\newcommand{\et}{{\rm \acute{e}t}}
\newcommand{\loc}{{\rm loc}}
\newcommand{\ab}{{\rm ab}}
\newcommand{\piet}{{\pi_1^{\rm \acute{e}t}}}
\newcommand{\pietloc}{{\pi_{\rm loc}^{\rm \acute{e}t}}}
\newcommand{\piN}{{\pi^{\rm N}_1}}
\newcommand{\piNloc}{{\pi_{\rm loc}^{\rm N}}}
\newcommand{\Het}[1]{{H_{\rm \acute{e}t}^{{#1}}}}
\newcommand{\Hfl}[1]{{H_{\rm fl}^{{#1}}}}
\newcommand{\Hcris}[1]{{H_{\rm cris}^{{#1}}}}
\newcommand{\HdR}[1]{{H_{\rm dR}^{{#1}}}}
\newcommand{\hdR}[1]{{h_{\rm dR}^{{#1}}}}
\newcommand{\Torsloc}{{\rm Tors}_{\rm loc}}
\newcommand{\defin}[1]{{\bf #1}}
\newcommand{\oX}{\cal{X}}
\newcommand{\oA}{\cal{A}}
\newcommand{\oY}{\cal{Y}}
\newcommand{\calC}{{\cal{C}}}
\newcommand{\calL}{{\cal{L}}}
\newcommand{\bmu}{\boldsymbol{\mu}}
\newcommand{\balpha}{\boldsymbol{\alpha}}
\newcommand{\bL}{{\mathbf{L}}}
\newcommand{\bM}{{\mathbf{M}}}
\newcommand{\bW}{{\mathbf{W}}}
\newcommand{\bD}{{\mathbf{D}}}
\newcommand{\bT}{{\mathbf{T}}}
\newcommand{\bO}{{\mathbf{O}}}
\newcommand{\bI}{{\mathbf{I}}}
\newcommand{\BD}{{\mathbf{BD}}}
\newcommand{\BT}{{\mathbf{BT}}}
\newcommand{\BI}{{\mathbf{BI}}}
\newcommand{\BO}{{\mathbf{BO}}}
\newcommand{\C}{{\mathbf{C}}}
\newcommand{\Dic}{{\mathbf{Dic}}}
\newcommand{\SL}{{\mathbf{SL}}}
\newcommand{\MC}{{\mathbf{MC}}}
\newcommand{\GL}{{\mathbf{GL}}}
\newcommand{\Tors}{{\mathbf{Tors}}}

\newcommand{\TY}[1]{\textcolor{blue}{(TY: #1)}}

\newcommand{\CL}[1]{\textcolor{red}{(CL: #1)}}

\makeatletter
\@namedef{subjclassname@2020}{\textup{2020} Mathematics Subject Classification}
\makeatother

\title[NCRs of LRQ singularities]{Non-commutative resolutions of linearly reductive quotient singularities}

\author{Christian Liedtke}
\address{TU M\"unchen, Zentrum Mathematik - M11, Boltzmannstr. 3, 85748 Garching bei M\"unchen, Germany}
\curraddr{Mathematical Institute, University of Oxford, Radcliffe Observatory, Andrew Wiles Building, Woodstock Rd, Oxford OX2 6GG, United Kingdom}
\email{christian.liedtke@tum.de}
\email{christian.liedtke@maths.ox.ac.uk}

\author{Takehiko Yasuda}
\address{Department of Mathematics, Graduate School of Science, Osaka University
Toyonaka, Osaka 560-0043, Japan}
\email{yasuda.takehiko.sci@osaka-u.ac.jp}

 \subjclass[2020]{14A22, 14B05, 13A35}
\keywords{non-commutative crepant resolution, linearly reductive group scheme, quotient singularity, F-singularity, canonical, log terminal, and toric singularities}

\begin{abstract}
We prove existence of non-commutative crepant resolutions (in the sense of van den Bergh) of quotient singularities
by finite and linearly reductive group schemes in positive characteristic. 
In dimension two, we relate these to resolutions of singularities provided by $G$-Hilbert schemes and F-blowups.
As an application, we establish and recover results concerning resolutions for toric
singularities, as well as canonical, log terminal, and F-regular singularities in dimension 2.
\end{abstract}
\setcounter{tocdepth}{1}
\maketitle
\tableofcontents
\section{Introduction}

\subsection{Non-commutative (crepant) resolutions}
Van den Bergh \cite{vdBergh} introduced the notion of a 
\emph{non-commutative (crepant) resolution} of a singularity and we refer
to \cite{Leuschke, vdBergh ICM, Wemyss} for motivation, results, and survey.
Following \cite{vdBergh, BO, IyamaWemyss}, we make the following definition.

\begin{Definition}\label{def-NCCR-intro}
 Let $R$ be a Noetherian, local, complete, and Cohen-Macaulay commutative ring and let $M$ be a nonzero reflexive $R$-module. The endomorphism ring $\End_R(M)$ is called 
 \begin{enumerate}
 \item a \emph{non-commutative resolution} (NCR for short) if $\End_R(M)$ has finite global dimension, and
 \item a \emph{non-commutative crepant resolution} (NCCR for short) if $\End_R(M)$ has  global dimension equal to $\dim R$ and $\End_R(M)$ is 
 a Cohen-Macaulay $R$-module.
 \end{enumerate}
\end{Definition}

By now, there is a lot of work dedicated to NCRs and NCCRs, such as 
for the following classes of singularities:
\begin{enumerate}
\item  quotient singularities by finite groups in characteristic zero \cite{vdBergh}
or by finite groups of order prime to the characteristic \cite{TY},
\item quotient singularities by not necessarily finite reductive group schemes in characteristic zero \cite{Spenko},
\item hypersurface singularities \cite{Dao}, and
\item toric singularities \cite{Faber}, \cite{Spenko2}, \cite{Spenko3}, \cite{Spenko4}.
\end{enumerate}

\subsection{Linearly reductive quotient singularities}\label{sec: lin red quot sings}
In this article, we study NCCRs in the following situation.

\begin{Setup}\label{Setup}
Let $k$ be an algebraically closed field of characteristic $p>0$.
Let $S:=k[[x_1,...,x_n]]$ be a formal power series ring over $k$.
Let $G$ be a finite group scheme over $k$ that acts on $\Spec S$, 
such that the action is free in codimension one.
We let 
$$
  R\,:=\,S^G \,\subseteq\, S
$$
be the invariant subring and set $X:=\Spec R$.
\end{Setup}

\begin{Definition}
 A finite group scheme $G$ over $k$ is called \emph{linearly reductive} if every
 $k$-linear and finite-dimensional representation of $G$ is semi-simple.
\end{Definition}

\begin{Definition}
 A scheme $X=\Spec R$ as in Setup \ref{Setup}
 is called a \emph{quotient singularity} by the group scheme $G$.
 If $G$ is linearly reductive, then $X$ is called a \emph{linearly reductive quotient singularity}
 (LRQ singularity for short).
\end{Definition}

For background and details on quotient singularities by finite group schemes and especially
linearly reductive ones, we refer to \cite{AOV,LMM, LMM2, Liedtke, LieSa}.

\subsection{NCCRs for LRQ singularities}
In Section \ref{sec: skew group scheme rings}, we first establish the following result,
which extends a classical proposition of Auslander \cite{Auslander}.
Our extension was essentially already obtained by Faber,
Ingalls, Okawa, and Satriano \cite[Proposition 2.26]{FIOS}, 
although they work in a slightly different setup.
Moreover, a similar result was proven in \cite{BHZ,CKWZ}, again 
in somewhat different setups.

\begin{Theorem}
\label{thm: intro1}
 There exists a \emph{skew group scheme ring}
 $$
   S*G \,=\, S\#H,
 $$
 where $H$ is the dual Hopf algebra of $H^0(G,\calO_G)$.
 Moreover, there exists a natural isomorphism
 $$
   S*G \,\to\, \End_R(S)
 $$
 of usually non-commutative $R$-algebras.
 \end{Theorem}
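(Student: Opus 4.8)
The plan is to construct $S*G$ as a smash product and then reduce the isomorphism to a codimension-one computation, exactly as in Auslander's argument but phrased Hopf-theoretically. The $G$-action makes $S$ a comodule algebra over the (finite-dimensional, commutative) Hopf algebra $A:=H^0(G,\calO_G)$, equivalently a module algebra over its dual $H:=A^\ast$, and $R=S^G$ is precisely the subring of $H$-invariants. I define $S*G:=S\#H$ to be the smash product, whose underlying $R$-module is $S\otimes_k H$ and whose multiplication is $(s\#h)(t\#h')=\sum s\,(h_{(1)}\cdot t)\#h_{(2)}h'$. The natural map $\Phi\colon S\#H\to\End_R(S)$ sends $s\#h$ to the endomorphism $t\mapsto s\,(h\cdot t)$. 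That $\Phi$ lands in $\End_R(S)$ and is an $R$-algebra homomorphism is a direct check from the module-algebra axioms: $R$-linearity uses that $h\cdot r=\epsilon(h)\,r$ for $r\in R$ (invariants are acted on through the counit), whence $h\cdot(rt)=r\,(h\cdot t)$, and multiplicativity is the defining relation of the smash product.

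Next I record that both sides are reflexive $R$-modules, which lets me test $\Phi$ in codimension one. The ring $R$ is normal, being the ring of invariants of the regular (hence normal) ring $S$; moreover $S$ is module-finite over $R$ with $\dim R=\dim S=n$, and since $S$ is Cohen--Macaulay it is a maximal Cohen--Macaulay $R$-module and therefore reflexive. Consequently $S\#H\cong S^{\oplus\dim_k H}$ is reflexive, and $\End_R(S)=\Hom_R(S,S)$ is reflexive because $\Hom$ into a reflexive module over a normal domain is again reflexive. Since a homomorphism of finite reflexive modules over a normal domain that is an isomorphism at every prime of height $\le 1$ is an isomorphism, it suffices to prove that $\Phi_{\mathfrak{p}}$ is an isomorphism for every height-one prime $\mathfrak{p}$ of $R$.

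Fix such a $\mathfrak{p}$. Because the action is free in codimension one, the non-free locus has codimension $\ge 2$, so $\Spec S\to X$ restricts to a $G$-torsor over $\Spec R_{\mathfrak{p}}$. Over the discrete valuation ring $R_{\mathfrak{p}}$ the finite torsion-free module $S_{\mathfrak{p}}$ is free, and the torsor property says exactly that the canonical Galois map $S_{\mathfrak{p}}\otimes_{R_{\mathfrak{p}}}S_{\mathfrak{p}}\to S_{\mathfrak{p}}\otimes_k A$ is bijective, i.e.\ $S_{\mathfrak{p}}/R_{\mathfrak{p}}$ is an $A$-Hopf--Galois extension. The structure theorem for Hopf--Galois extensions (Kreimer--Takeuchi) then shows that the canonical map $S_{\mathfrak{p}}\#H\to\End_{R_{\mathfrak{p}}}(S_{\mathfrak{p}})$ is an isomorphism; as localization commutes with the smash product and, since $S$ is module-finite over $R$, with $\End_R(S)$, this localized map equals $\Phi_{\mathfrak{p}}$.

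I expect the third paragraph to be the crux. The two things to pin down there are (i) the precise dictionary turning ``the $G$-action is free over $\Spec R_{\mathfrak{p}}$'' into the bijectivity of the Galois map $S_{\mathfrak{p}}\otimes_{R_{\mathfrak{p}}}S_{\mathfrak{p}}\to S_{\mathfrak{p}}\otimes_k A$, and (ii) the Hopf--Galois identification $S_{\mathfrak{p}}\#H\cong\End_{R_{\mathfrak{p}}}(S_{\mathfrak{p}})$, which one either cites or reproves by exhibiting mutually inverse maps using the freeness of $S_{\mathfrak{p}}$ over $R_{\mathfrak{p}}$. Everything else---the algebra structure on $S\#H$, the reflexivity bookkeeping, and the passage from codimension one to the global statement---is formal once these local facts are in hand. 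Note that linear reductivity of $G$ plays no role in this theorem; it will only enter later, when controlling the global dimension of $\End_R(S)$.
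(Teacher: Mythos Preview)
Your proposal is correct and follows essentially the same route as the paper: construct the smash product $S\#H$, define the natural map to $\End_R(S)$, verify that both sides are reflexive $R$-modules over the normal base $R$, reduce to height-one primes, and there invoke the Hopf--Galois/Kreimer--Takeuchi theorem using freeness of the action in codimension one. The only cosmetic differences are that the paper argues normality of $R$ via compatibility of invariants with localisation (citing Skryabin) rather than by a direct ``invariants of a normal ring'' appeal, and that the paper sets up the torsor/Hopf--Galois dictionary in the paragraph preceding the proposition rather than inside the proof; your added remarks on why $\Phi$ is an $R$-algebra map and why localisation commutes with $\End_R$ and with $\#$ are details the paper leaves implicit.
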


Next, we establish the following equivalences, which generalise results
of Broer and the second-named author \cite{Broer, Yasuda}.

\begin{Theorem}\label{thm: intro-1.5}
 The following are equivalent:
 \begin{enumerate}
 \item $R$ is a pure subring of $S$.
 \item $R$ is strongly F-regular.
 \item $G$ is linearly reductive.
 \item $S*G$ has finite global dimension.
 \end{enumerate}
 Moreover, if $S*G$ has finite global dimension, then
 $\mathrm{gl.dim}(S*G)=n$.
\end{Theorem}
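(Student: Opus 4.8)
The plan is to prove the two equivalences $(1)\Leftrightarrow(2)$ and $(1)\Leftrightarrow(3)$, which together give the equivalence of (1), (2) and (3), and then to establish $(3)\Leftrightarrow(4)$ together with the global dimension formula by a separate, Hopf-algebraic argument. Most of the arrows among (1), (2), (3) are a routine dictionary between invariant theory and $F$-singularities; the only serious input is the implication $(1)\Rightarrow(3)$, that purity of the invariant inclusion forces linear reductivity, and this is where I expect the main obstacle to lie.

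I would dispose of the easy arrows first. For $(3)\Rightarrow(1)$ I would use that a linearly reductive $G$ admits a Reynolds operator, i.e. an $R$-linear, $G$-equivariant projection $S\to S^G=R$ splitting the inclusion; a split inclusion of rings is in particular pure. For $(1)\Rightarrow(2)$ I would invoke that $S=k[[x_1,\dots,x_n]]$ is regular, hence strongly $F$-regular, and that strong $F$-regularity descends to pure subrings; since $S/R$ is a finitely presented $R$-module, the pure inclusion $R\hookrightarrow S$ in fact splits, so $R$ is a direct summand of a strongly $F$-regular ring and is therefore strongly $F$-regular. For $(2)\Rightarrow(1)$ I would use that a strongly $F$-regular ring is a splinter, so the module-finite extension $R\subseteq S$ splits and purity is recovered; this gives $(1)\Leftrightarrow(2)$.

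For $(3)\Leftrightarrow(4)$ I would translate linear reductivity of $G$ into semisimplicity of the finite-dimensional Hopf algebra $H=H^0(G,\calO_G)^\vee$, since $G$-representations are exactly $H$-modules. If $H$ is semisimple it carries a normalised two-sided integral, so $S\subseteq S\#H$ is a separable extension; together with the freeness of $S\#H$ over $S$ this yields the standard comparison $\mathrm{gl.dim}(S\#H)=\mathrm{gl.dim}(S)=n$, giving $(3)\Rightarrow(4)$ and, simultaneously, the asserted formula. For $(4)\Rightarrow(3)$ I would use that the maximal ideal $\mm=(x_1,\dots,x_n)\subseteq S$ is $G$-stable, so $x_1,\dots,x_n$ is a normalising regular sequence in the Jacobson radical of $S\#H$ with $(S\#H)/\mm(S\#H)\cong H$; the change-of-rings inequality for such sequences gives $\mathrm{gl.dim}(H)\le \mathrm{gl.dim}(S\#H)-n<\infty$. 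A finite-dimensional Hopf algebra is Frobenius, hence self-injective, so it can have finite global dimension only if that dimension is zero, i.e. only if it is semisimple; thus $H$ is semisimple and $G$ is linearly reductive.

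The remaining and most delicate arrow is $(1)\Rightarrow(3)$: purity (equivalently, by $(1)\Leftrightarrow(2)$, strong $F$-regularity) of $R$ must force $G$ to be linearly reductive. This is exactly the content generalising \cite{Broer, Yasuda}, and I expect it to be the main obstacle. My plan is contrapositive: assuming $G$ is not linearly reductive I would, using the structure theory of finite group schemes, extract a non-trivial unipotent subquotient (an $\alpha_p$ or a wild $\ZZ/p$) and show that the associated quotient singularity already fails to be strongly $F$-regular, detecting the failure either through vanishing or smallness of the $F$-signature or, in the spirit of Broer's direct-summand criterion, by showing directly that the invariant inclusion $R\subseteq S$ cannot split. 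The difficulty is precisely that the ring-theoretic splitting of $R\hookrightarrow S$ has to be converted into a representation-theoretic averaging over $G$: linear reductivity is the obstruction to doing so, and making this obstruction visible $F$-theoretically, uniformly over the possible wild parts of $G$, is the crux of the argument.
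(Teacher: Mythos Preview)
Your easy arrows $(1)\Leftrightarrow(2)$, $(3)\Rightarrow(1)$, and $(3)\Rightarrow(4)$ are correct and essentially match the paper. The difficulties lie elsewhere, and in two places your plan diverges from what actually works.

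\textbf{The main gap: $(1)\Rightarrow(3)$ is not needed, and your sketch does not prove it.} You correctly identify this as the delicate direction, but your contrapositive plan (extract a unipotent subquotient $\alpha_p$ or $\ZZ/p\ZZ$ and show the ``associated quotient singularity'' fails strong $F$-regularity) has no clear mechanism. A subquotient of $G$ does not inherit an action on $S$ with the same codimension-one freeness, and even for a genuine subgroup $H\subseteq G$, purity of $R=S^G\hookrightarrow S$ tells you only that $R\hookrightarrow S^H$ is pure, not that $S^H\hookrightarrow S$ is. The paper sidesteps this implication entirely. It proves $(1)\Rightarrow(4)$ directly, using the Auslander-type isomorphism $S\ast G\cong\End_R(S)$ of Proposition~\ref{prop: endoring}, which holds \emph{without} any linear reductivity hypothesis (only freeness in codimension one). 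Since $S\ast G$ is free over $S$, the ring $\End_R(S)$ is Cohen--Macaulay over $S$ and hence over $R$; then purity of $R\subseteq S$ forces $\End_R(S)$ to be an NCCR by \cite[Corollary 2.11]{Yasuda}, in particular of finite global dimension. The loop is then closed by $(4)\Rightarrow(3)$. The key insight you are missing is that Proposition~\ref{prop: endoring} lets you transport module-theoretic information about $\End_R(S)$ back to $S\ast G$.

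\textbf{A secondary gap in $(4)\Rightarrow(3)$.} Your change-of-rings argument requires $x_1,\dots,x_n$ to be a \emph{normalising} regular sequence in $S\#H$, i.e.\ $x_i\Lambda=\Lambda x_i$ in the successive quotients. But $(1\#h)(x_i\#1)=(h_{(1)}\cdot x_i)\#h_{(2)}$, and unless the $x_i$ are $G$-eigenvectors this need not lie in $x_i(S\#H)$; for non-linearly-reductive $G$ one cannot diagonalise. What is true is only that $\idealm(S\#H)$ is a two-sided ideal contained in the Jacobson radical. The paper's argument is different: again using $S\ast G\cong\End_R(S)$ to see that $S$ is a \emph{faithful} $S\ast G$-module, it invokes a result of Yang \cite[Corollary 2.3]{Yang} to produce $c\in S$ with $t\cdot c=1$ for a nonzero left integral $t\in H$; reducing modulo $\idealm$ gives $\epsilon(t)\ne 0$, so $H$ is semisimple by Maschke's criterion. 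Your Frobenius/self-injective conclusion for $H$ is fine once $\mathrm{gl.dim}(H)<\infty$ is established, but getting there needs a different route than the normalising-sequence inequality you invoke.
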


As an application, we obtain that LRQ singularities admit
NCCRs.

 \begin{Theorem}
  \label{thm: intro2}
   If $G$ is linearly reductive, then  
  \begin{enumerate}
 \item $\End_R(S)$ is an NCCR of $R$.
 \item If $e$ is sufficiently large, then $\End_R(R^{1/p^e})$ 
 is Morita equivalent to $\End_R(S)$ and thus, also an NCCR of $R$.
 \end{enumerate}
\end{Theorem}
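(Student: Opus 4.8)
The plan is to deduce Theorem \ref{thm: intro2} from the two preceding theorems, Theorem \ref{thm: intro1} and Theorem \ref{thm: intro-1.5}, together with standard facts about F-regular rings and the theory of F-blowups. For part (1), recall that by Theorem \ref{thm: intro1} we have a natural isomorphism $S*G \cong \End_R(S)$ of $R$-algebras. Since $G$ is assumed linearly reductive, condition (3) of Theorem \ref{thm: intro-1.5} holds, so by the equivalence we get that $S*G$ has finite global dimension, and by the final sentence of that theorem $\mathrm{gl.dim}(S*G) = n$. Transporting this across the isomorphism, $\End_R(S)$ has global dimension equal to $n = \dim R$. To conclude that $\End_R(S)$ is an NCCR in the sense of Definition \ref{def-NCCR-intro}, it remains to verify two things: first, that $S$ is a nonzero reflexive $R$-module, and second, that $\End_R(S)$ is a maximal Cohen-Macaulay $R$-module.

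First I would check that $S$ is a reflexive (indeed maximal Cohen-Macaulay) $R$-module: since $S$ is a regular local ring it is Cohen-Macaulay of dimension $n$, and as $R = S^G$ with the action free in codimension one, $S$ is finite over $R$ and $R$ has the same dimension $n$; thus $S$ is a maximal Cohen-Macaulay $R$-module, hence reflexive. For the Cohen-Macaulayness of $\End_R(S)$ as an $R$-module, I would use that $\End_R(S) \cong S*G$ is, as an $R$-module, a finite direct sum of copies of $S$ (or more precisely is built from $S$ via the Hopf-algebra structure $S\#H$ of Theorem \ref{thm: intro1}), and a finite direct sum of maximal Cohen-Macaulay modules is again maximal Cohen-Macaulay. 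Combining finite global dimension equal to $\dim R$ with the Cohen-Macaulay property over $R$ gives exactly the NCCR condition of Definition \ref{def-NCCR-intro}(2).

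For part (2), the strategy is to identify $\End_R(R^{1/p^e})$ with $\End_R(S)$ up to Morita equivalence for $e$ large. The key input is that, because $G$ is linearly reductive, Theorem \ref{thm: intro-1.5} gives that $R$ is strongly F-regular (equivalently, $R$ is a pure subring of $S$). The module $R^{1/p^e}$ (the ring of $p^e$-th roots, viewed as an $R$-module via the Frobenius) decomposes as a direct sum of indecomposable reflexive modules, and as $e$ grows, the set of indecomposable summands stabilizes and eventually contains every indecomposable reflexive $R$-module-summand appearing in $S$; this is precisely the phenomenon underlying the theory of F-blowups, and in the tame (linearly reductive) quotient setting the indecomposable reflexive summands of $S^{1/p^e}$ correspond to the irreducible representations of $G$. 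I would argue that for $e \gg 0$ the modules $R^{1/p^e}$ and $S$ have the same set of indecomposable reflexive summands (those summands being in bijection with the irreducible $G$-representations), so that $\End_R(R^{1/p^e})$ and $\End_R(S)$ are endomorphism rings of reflexive modules with the same additive closure, hence Morita equivalent. Morita equivalence preserves both finite global dimension and the property of being a Cohen-Macaulay module over the center $R$, so the NCCR property transfers from $\End_R(S)$ established in part (1) to $\End_R(R^{1/p^e})$.

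The main obstacle I expect is the stabilization statement in part (2): precisely controlling which indecomposable reflexive summands occur in $R^{1/p^e}$ for large $e$ and showing they exhaust exactly those occurring in $S = S^G$-related modules. This requires the structure theory of F-regular (F-finite) rings and the relationship between Frobenius pushforwards, the $G$-representation theory, and the \emph{global F-signature} or the combinatorics of F-blowups; one must ensure the linear reductivity of $G$ guarantees that no ``wild'' new summands appear and that all irreducible representations are eventually realized. Once the summand-stabilization is in hand, the reduction to Morita equivalence and the transfer of the NCCR property are formal.
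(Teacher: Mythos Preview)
Your proposal is correct and matches the paper's approach: part (1) is established inside the proof of Theorem \ref{thm: equivalences} (the implication $(1)\Rightarrow(4)$) via exactly the observation that $\End_R(S)\cong S\ast G$ is free over $S$ and hence Cohen--Macaulay over $R$, combined with the global-dimension statement. For part (2) the paper simply cites \cite[Corollary 4.2]{Yasuda}, whose content is precisely the summand-stabilisation you flag as the main obstacle (for $e\gg 0$ the indecomposable $R$-summands of $R^{1/p^e}$ coincide with those of $S$, cf.\ \cite[Proposition 4.1]{Yasuda}), so your sketch is on target and that obstacle is already resolved in the cited reference.
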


This result was already known in characteristic zero, as well as for finite groups of order prime to 
the characteristic of the ground field:
in these cases, the first assertion follows, for example, from combining results of Auslander \cite{Auslander} 
and Yi \cite{Yi}, and the second assertion was established by Toda and the second-named author \cite{TY}. 

\subsection{Auslander's results and dimension two}
There are some classical results of Auslander \cite{AuslanderRS}
that carry over to the situation of this article.
To state them, we introduce the following categories:
$$
\begin{array}{lcl}
\mathrm{Rep}_k(G) & : & \mbox{finite-dimensional $k$-linear $G$-representations}\\
\calP & \qquad : \qquad& \mbox{finite and projective $S\ast G$-modules}\\
\calL & \qquad : \qquad &\mbox{finite and reflexive $R$-modules}\\
\mathrm{add}_R(S) & \qquad:\qquad& \mbox{summands of finite sums of $S$}
\end{array}
$$
In Section \ref{sec: Auslander}, we will show that these categories are related as follows.

\begin{Theorem}
\label{thm: intro3}
 Assume that $G$ is linearly reductive. 
 \begin{enumerate}
    \item The functors
     $$
     \begin{array}{ccccc}
        \mathrm{Rep}_k(G)&\to&\calP &\to& \mathrm{add}_R(S)\\
        W&\mapsto&S\otimes_kW\\
        &&P&\mapsto&P^G
      \end{array}
      $$
     induce equivalences of categories.
     Simple representations correspond to indecomposable modules
     under these equivalences.
\item If $n=2$, then the inclusion
$$
  \mathrm{add}_R(S) \,\subseteq\,\calL
$$
is an equivalence of categories.
 \end{enumerate}
 \end{Theorem}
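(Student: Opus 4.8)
The plan is to treat the two assertions separately, deducing (1) from Auslander-style ``projectivization'' together with the isomorphism $S*G\cong\End_R(S)$ of Theorem~\ref{thm: intro1}, and (2) from the dimension-two fact that reflexive modules over a regular local ring are free. For (1), I would first check that the functor $W\mapsto S\otimes_kW$ (with the diagonal $G$-action) lands in $\calP$: writing $H_{\mathrm{reg}}$ for the left regular representation, the smash-product description gives $S*G\cong S\otimes_k H_{\mathrm{reg}}$ as a left $S*G$-module, and since $G$ is linearly reductive $H_{\mathrm{reg}}$ contains every simple, so each $W$ is a summand of some $H_{\mathrm{reg}}^{\oplus m}$ and hence $S\otimes_kW$ is a summand of $(S*G)^{\oplus m}$. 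Conversely every object of $\calP$ is such a summand, so by Krull--Schmidt (valid since $S*G$ is module-finite over the complete local ring $R$) the indecomposable projectives are exactly the $S\otimes_kV_i$ for $V_i$ simple. That these are indecomposable and pairwise non-isomorphic follows from computing $\End_{S*G}(S\otimes_kV_i)\cong(S\otimes_k\End_k(V_i))^G$, a local ring whose residue field is $\End_G(V_i)=k$ by Schur's lemma. Thus $W\mapsto S\otimes_kW$ is faithful, essentially surjective, and bijective on isomorphism classes, matching simple representations with indecomposable projectives.

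The genuine categorical equivalence is the second functor $(-)^G\colon\calP\to\mathrm{add}_R(S)$. Here I would invoke the standard principle that for the additive generator $S$ of $\mathrm{add}_R(S)$ with $\End_R(S)\cong S*G$ (Theorem~\ref{thm: intro1}), the functor $\Hom_R(S,-)$ is an equivalence $\mathrm{add}_R(S)\xrightarrow{\sim}\calP=\mathrm{proj}(S*G)$, and then identify its quasi-inverse with $(-)^G$ by checking on the generator that $(S*G)^G\cong S$ naturally as $R$-modules. Equivalently, one shows directly that $(-)^G$ is fully faithful via the identification $\Hom_R\big((S\otimes_kW)^G,(S\otimes_kW')^G\big)\cong(S\otimes_k\Hom_k(W,W'))^G=\Hom_{S*G}(S\otimes_kW,S\otimes_kW')$, which holds because a homomorphism between modules of covariants is determined in codimension one, where the action is free. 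Composing, simple representations correspond to the indecomposable summands $(S\otimes_kV_i)^G$ of $S$.

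For (2), first note $\mathrm{add}_R(S)\subseteq\calL$ because $S$ is maximal Cohen--Macaulay, hence reflexive, over $R$, and summands of reflexive modules are reflexive. For essential surjectivity I would use that in dimension two a finitely generated $R$-module is reflexive iff it is maximal Cohen--Macaulay, and run the chain
$$
\calL=\mathrm{Ref}(R)\ \xrightarrow{\ \sim\ }\ \mathrm{Ref}^G(S)\ =\ \{\,S\otimes_kW\,\}\ =\ \calP\ \xrightarrow{(-)^G}\ \mathrm{add}_R(S),
$$
where the first equivalence is the reflexive-module correspondence $(-)^G$ / $(S\otimes_R-)^{**}$ for the quotient $R=S^G$ (valid in any dimension because $G$ is linearly reductive and acts freely in codimension one), the middle equality uses that a reflexive module over the regular local ring $S$ is free and that a $G$-equivariant free $S$-module is rigidified to $S\otimes_k W$ by linear reductivity, and the last arrow is the equivalence of~(1). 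Since all these functors are compatible with $(-)^G$, the composite is quasi-inverse to the inclusion $\mathrm{add}_R(S)\hookrightarrow\calL$, proving it is an equivalence.

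The main obstacle, in both parts, is the behaviour of the invariant functor $(-)^G$ on Hom-groups and on reflexive modules: concretely, establishing that $(-)^G$ is fully faithful on modules of covariants and that $\mathrm{Ref}(R)\simeq\mathrm{Ref}^G(S)$. Both rest on the same geometric input---that $\Spec S\to\Spec R$ is a $G$-torsor away from codimension two, so that equivariant reflexive data descend and homomorphisms are pinned down in codimension one---and this codimension-one and reflexivity bookkeeping, rather than any homological input, is where the real work lies.
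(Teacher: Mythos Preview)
Your proposal is correct and follows essentially the same route as the paper: Auslander projectivization via the isomorphism $S*G\cong\End_R(S)$ for $\calP\simeq\mathrm{add}_R(S)$, and in dimension two the passage through $G$-equivariant reflexive (hence free) $S$-modules, which is exactly Auslander's argument that the paper simply cites. The only minor deviation is in matching simple $G$-representations with indecomposable projectives: the paper uses semiperfectness of $S*G$ together with the Jacobson-radical identity $\mathfrak{j}=\mathfrak{m}(S*G)$ already established in Theorem~\ref{thm: intro-1.5}, whereas you argue directly via Krull--Schmidt and the endomorphism-ring computation $\End_{S*G}(S\otimes_kV_i)\cong(S\otimes_k\End_k(V_i))^G$; both routes are standard and yield the same conclusion.
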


This can be viewed as a \emph{non-commutative McKay correspondence}.

\subsection{F-blowups and G-Hilbert schemes}
For a variety $X$ over a perfect field $k$ of positive characteristic $p>0$ and an 
integer $e\geq1$, the second-named author introduced in \cite{YasudaUF}
the \emph{e.th F-blowup}
$$
   \mathrm{FB}_e(X) \,\to\,X,
$$
which is a proper and birational morphism and which is an isomorphism over the smooth locus of $X$.
In Section \ref{sec: Fblowups}, we show the following result.

\begin{Theorem}
 \label{thm: intro4}
 If $G$ is linearly reductive, then
 there exist natural, proper, and birational morphisms
 $$
   \mathrm{Hilb}^G(\Spec S) \,\stackrel{\psi_e}{\longrightarrow}\, \mathrm{FB}_e(\Spec R) \,\to\, \Spec R.
 $$
 Moreover, $\psi_e$ is an isomorphism if $e$ is sufficiently large.
 \begin{enumerate}
 \item If $n=2$, then $\mathrm{Hilb}^G(\Spec S)\to\Spec R$ is the minimal resolution of singularities.
 \item If $n=3$ and $R$ is Gorenstein, then 
 $\mathrm{Hilb}^G(\Spec S)\to\Spec R$ is a crepant resolution of singularities.
 \end{enumerate}
\end{Theorem}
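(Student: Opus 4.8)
The plan is to construct the two morphisms, to produce $\psi_e$ via a universal property, to prove that it is an isomorphism for $e\gg 0$ by reducing to the Morita equivalence of Theorem \ref{thm: intro2}, and finally to deduce the geometric statements in dimensions two and three from the classical McKay correspondence and from Van den Bergh's theory. First I would record the basic properties of the two maps to $\Spec R$. The morphism $\mathrm{FB}_e(\Spec R)\to\Spec R$ is proper, birational, and an isomorphism over the regular locus by the general theory of F-blowups \cite{YasudaUF}, since by definition $\mathrm{FB}_e(\Spec R)$ is the universal flattening of the coherent sheaf $F^e_*\calO_X\cong R^{1/p^e}$, and $R^{1/p^e}$ is locally free over $R$ exactly over the regular locus. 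The morphism $\mathrm{Hilb}^G(\Spec S)\to\Spec R$ is proper because Hilbert schemes are proper over the base, and it is birational because over the open locus $U\subseteq X$ where $G$ acts freely---whose complement has codimension $\geq 2$ and over which $X$ is regular---the scheme $\Spec S$ is a $G$-torsor, so each $G$-cluster is a torsor fibre and $\mathrm{Hilb}^G(\Spec S)\times_X U\cong U$. Linearising the $G$-action (possible since $G$ is linearly reductive), one has $S^{1/p^e}\cong S\otimes_k V_e$ as $G$-equivariant $S$-modules for a suitable finite-dimensional $G$-representation $V_e$, whence $R^{1/p^e}=(S^{1/p^e})^G=(S\otimes_kV_e)^G$ lies in $\mathrm{add}_R(S)$ by Theorem \ref{thm: intro3}.

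Next I would construct $\psi_e$ from the universal property of $\mathrm{FB}_e(\Spec R)$ as a universal flattening: it suffices to exhibit on $\mathrm{Hilb}^G(\Spec S)$ a locally free quotient, modulo torsion, of the pullback of $R^{1/p^e}$. For this I would use the tautological locally free sheaves $\mathcal{R}_\rho$ on $\mathrm{Hilb}^G(\Spec S)$ indexed by the simple $G$-representations $\rho$, obtained from the isotypic decomposition of the structure sheaf of the universal $G$-cluster. Since generically $R^{1/p^e}$ decomposes into the isotypic components of $V_e$, the locally free sheaf $\bigoplus_\rho \mathcal{R}_\rho^{\oplus m_\rho}$, with $m_\rho$ the multiplicity of $\rho$ in $V_e$, agrees with $R^{1/p^e}$ over $U$; this provides the required flattening and hence the morphism $\psi_e$ over $\Spec R$, which is then automatically proper and birational.

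To prove that $\psi_e$ is an isomorphism for $e\gg 0$, the key input is Theorem \ref{thm: intro2}. By strong F-regularity (Theorem \ref{thm: intro-1.5}) the ring $R$ has finite F-representation type, and every simple $G$-representation---equivalently, via Theorem \ref{thm: intro3}, every indecomposable summand of $S$---already occurs in $V_e$ once $e$ is large; thus $\mathrm{add}_R(R^{1/p^e})=\mathrm{add}_R(S)$ and $\End_R(R^{1/p^e})$ is Morita equivalent to $\End_R(S)=S*G$. I would then identify both $\mathrm{Hilb}^G(\Spec S)$ and $\mathrm{FB}_e(\Spec R)$ as moduli spaces of stable modules over these two algebras, for the dimension vector given by the regular representation and a suitable stability parameter, so that the Morita equivalence matches the two moduli problems and $\psi_e$ becomes an isomorphism. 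I expect this identification to be the main obstacle: while $\mathrm{Hilb}^G(\Spec S)$ is the moduli of $G$-clusters essentially by definition, rewriting the universal-flattening (Quot-scheme) description of $\mathrm{FB}_e(\Spec R)$ as the moduli space attached to $\End_R(R^{1/p^e})$---and checking that the stability conditions correspond---is the genuinely new comparison in this positive-characteristic, group-scheme setting.

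Finally, granting $\mathrm{Hilb}^G(\Spec S)\cong\mathrm{FB}_e(\Spec R)$ for $e\gg0$, I would read off the last two assertions. For $n=2$, Theorem \ref{thm: intro3}(2) gives $\mathrm{add}_R(S)=\calL$, so $R$ has finite Cohen--Macaulay type with the indecomposable reflexive modules indexed by the simple representations; smoothness of $\mathrm{Hilb}^G(\Spec S)$ and its identification with the minimal resolution then follow from the two-dimensional McKay correspondence, the point being that a smooth proper birational surface over the normal surface $\Spec R$ is the minimal resolution once one checks, using the tautological bundles $\mathcal{R}_\rho$, that no exceptional curve is a $(-1)$-curve. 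For $n=3$ with $R$ Gorenstein, the NCCR $\End_R(S)$ is a symmetric $R$-order, so Van den Bergh's construction of crepant resolutions as moduli of representations of a three-dimensional NCCR \cite{vdBergh} applies: the relevant moduli space is $\mathrm{Hilb}^G(\Spec S)$, it is smooth, and the morphism to $\Spec R$ is crepant, the Gorenstein hypothesis being exactly what trivialises the canonical module and upgrades the resolution to a crepant one.
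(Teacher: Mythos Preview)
Your overall plan is sound, and for the $n=3$ Gorenstein case you land on exactly the paper's argument (Van den Bergh's moduli construction). But for the construction of $\psi_e$ and especially for the proof that it is an isomorphism, the paper takes a much shorter and more robust route that bypasses the ``main obstacle'' you flag.

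The key point you are missing is that \emph{both} morphisms are instances of the same construction: the blowup of $\Spec R$ at a finitely generated $R$-module in the sense of Oneto--Zatini and Villamayor. By definition $\mathrm{FB}_e(\Spec R)$ is the blowup at $R^{1/p^e}$; the paper first proves a short lemma that $\mathrm{Hilb}^G(\Spec S)$ is the blowup at the $R$-module $S$ (using the universal family to get one direction and $(Y\times_{\Spec R}\Spec S)_{\mathrm{red}}$ for the other). Once both sides are blowups at modules, one uses two elementary facts: the blowup at $M$ depends only on the set of isomorphism classes of indecomposable summands of $M$, and if every indecomposable summand of $M$ occurs in $M'$ then there is a natural morphism from the blowup at $M'$ to the blowup at $M$. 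Since $R^{1/p^e}\in\mathrm{add}_R(S)$ (which you already observed), this immediately yields $\psi_e$; and since for $e\gg 0$ the indecomposable summands of $R^{1/p^e}$ and of $S$ coincide (this is \cite[Proposition 4.1]{Yasuda}, and is exactly the content behind the Morita equivalence you invoke), $\psi_e$ is an isomorphism. No moduli of stable modules, no matching of stability conditions.

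Your proposed detour through moduli spaces of stable representations of $S\ast G$ and of $\End_R(R^{1/p^e})$ could in principle be made to work, but as you yourself note, identifying $\mathrm{FB}_e(\Spec R)$ with such a moduli space and matching stability parameters is genuinely nontrivial and not available off the shelf in this setting; the blowup-at-a-module viewpoint sidesteps this entirely. Similarly, your construction of the flattening on $\mathrm{Hilb}^G(\Spec S)$ via $\bigoplus_\rho\mathcal{R}_\rho^{\oplus m_\rho}$ is morally right but needs an argument that this bundle is actually the torsion-free quotient of the pullback of $R^{1/p^e}$, not merely a sheaf that agrees with it generically; again, recognising $\mathrm{Hilb}^G(\Spec S)$ as the blowup at $S$ makes this automatic. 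For $n=2$, the paper simply cites the already-established result that $\mathrm{Hilb}^G(\Spec S)\to\Spec R$ is the minimal resolution for linearly reductive $G$ \cite[Theorem 4.5]{Liedtke}, rather than rerunning the $(-1)$-curve analysis.
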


This extends results of Toda and the second-named author \cite{TY,YasudaUF} from 
the case where $G$ is a finite group of order prime to $p$, the characteristic of the ground field.
Assertion (1) was already shown by the first-named author \cite{Liedtke} and extends
results of Ishii, Ito, and Nakamura \cite{Ishii, IshiiN, IN} from the case where $G$ is a finite group
of order prime to $p$. 
Assertion (2) in characteristic zero was established by Bridgeland, King, and Reid,
as well as Nakamura \cite{BKR, Nakamura}.

\subsection{Examples}
In Section \ref{sec: applications}, we apply these results to some classes of singularities:
\begin{enumerate}
\item We establish the existence of NCCRs for normal and $\QQ$-factorial 
toric singularities.
This recovers some results of Faber, Mullen, and Smith \cite{Faber} and extends some results of 
\v{S}penko and van den Bergh \cite{Spenko} to
positive characteristic.
\item We establish the existence of NCCRs
for F-regular surface singularities via their description as LRQ singularities.
This includes all canonical and log terminal singularities in dimension 2 and 
characteristic $p\geq7$.
We then recover results of Hara \cite{Hara} and the first-named author \cite{Liedtke} 
that $G$-Hilbert schemes and sufficiently high F-blowups yield the minimal
resolution of such singularities.
\end{enumerate}

Although some of these results were previously known, it is interesting that our approach
gives a natural and uniform approach via the description of these singularities as
quotients by finite and linearly reductive group schemes.

After finishing the first draft of this paper, we noticed that Hashimoto and Kobayashi 
\cite[Lemma 3.9]{HashimotoKobayashi} independently obtained a result similar to our 
Theorems \ref{thm: intro-1.5} and \ref{thm: equivalences}. 

\begin{VoidRoman}[Acknowledgements]
 We thank Mitsuyasu Hashimoto, Gregor Kemper, Gebhard Martin,
 Shinnosuke Okawa, and Michael Wemyss for discussions and comments.
 The first-named author thanks the Mathematical Institute of the University of Oxford for kind hospitality. 
 The second-named author was supported by JSPS KAKENHI Grant Numbers JP18H01112, JP21H04994,
 and JP23H01070.
\end{VoidRoman}

\section{Skew group scheme rings}
\label{sec: skew group scheme rings}

For $G$ a finite group scheme over a field $k$ of characteristic $p\geq0$
that acts on a $k$-algebra $S$, we construct
in this section the skew group scheme ring $S\ast G$.
If $R:=S^G\subseteq S$ is the invariant subring, then we study a natural homomorphism of
usually non-commutative $R$-algebras
$$
  S\ast G\,\to\,\End_R(S)
$$
and show that it is an isomorphism if the action of $G$ on $S$ is free in codimension one.
If $p>0$, then we show that $R\subseteq S$ is a pure subring if and only
if $R$ is strongly F-regular if and only if $G$ is linearly reductive if and only if 
$S\ast G$ has finite global dimension.

\subsection{Skew group scheme rings}
\label{subsec: skew group scheme rings}
Let $k$ be a field, let $S$ be a commutative $k$-algebra, and let $G\to\Spec k$
be a finite group scheme.
Assume that we have an action 
$$
   \rho \,:\,G\,\to\, \Aut_{\Spec S/\Spec k}.
$$
Then, $H^0(G,\calO_G)$ is a finite dimensional Hopf-algebra
over $k$ and we denote by $H:=H^0(G,\calO_G)^*$
the dual Hopf algebra.
The action $\rho$ corresponds to a coaction of $H^0(G,\calO_G)$
on $S$ and thus, $S$ is a right $H^*$-comodule algebra.
This is equivalent to $S$ being a left $H$-module algebra and we
let 
$$
   S*G \,:=\, S\# H \,=\, S\#\left(H^0(G,\calO_G)^*\right)
$$
be the associated smash product algebra, see, 
for example, \cite[Definition 4.1.3]{Montgomery}.

\begin{Definition}
We call $S*G$ the \emph{skew group scheme ring}.
\end{Definition}

\begin{Example}
Let $G_{\mathrm{abs}}$ be a finite group that acts on a $k$-algebra
$S$.
Let $G\to\Spec k$ be the constant group scheme associated to $G_{\mathrm{abs}}$.
Then, $H:=H^0(G,\calO_G)^*$ is isomorphic to the group algebra $k[G_{\mathrm{abs}}]$
with its usual Hopf algebra structure.
From this and \cite[Example 4.1.6]{Montgomery}, we conclude that $S*G$ as just defined
coincides with the classical skew group ring $S*G_{\mathrm{abs}}$.
\end{Example}

\subsection{Invariant rings}
With assumptions and notations from the previous paragraph, we let
$$
   R \,:=\, S^G \,\subseteq\, S
$$
be the ring of invariants with respect to the $G$-action on $S$.
In the language of Hopf algebras, these are the $H$-invariants of $S$
with $H=H^0(G,\calO_G)^*$ as defined, for example, in  \cite[Definition 1.7.1]{Montgomery}.

The multiplication $S\times S\to S$ and the $G$-action on $S$ are $R$-linear
and thus, we obtain morphisms $S\to\End_R(S)$ and $H\to\End_R(S)$.
It is easy to see that these combine to 
a natural homomorphism
$$
S\ast G \,=\, S\# H\,\to\, \End_R(S)
$$
of usually non-commutative $R$-algebras.

We now specialise to the case where $S=k[[x_1,...,x_n]]$.
Moreover, we will also assume that  the action $\rho$ is \emph{free in codimension one}, that is,
there exists a Zariski-closed subset $Z\subset\Spec S$ of codimension 
at least two, such that there is an induced action
$G\times V\to V$ where $V:=\Spec S\backslash Z$ 
and where
action is free in the scheme sense.
More precisely, if $\pi:\Spec S\to\Spec R$ denotes the quotient morphism by the $G$-action,
then we set $U:=\Spec R\backslash\pi(Z)$
and then, \emph{freeness} means that the morphism
$$
  V\times_{\Spec k} G \,\to\, V\times_U V
$$
defined by $(v,g)\mapsto(v,gv)$
is an isomorphism.
In the language of Hopf algebras, this means that if $\Spec S'$ is an affine open subset of $V$ which is stable under the $G$-action, then the $H$-action on $S$ with
$H=H^0(G,\calO_G)^*$ is \emph{Galois} in the sense of
\cite[Definition 8.1.1]{Montgomery}.
Moreover, for each $\mathfrak{p}\in U$, the $H$-action on 
$S_{\mathfrak{p}}:=S\otimes_{R} R_{\mathfrak{p}}$ is Galois. 

The following generalises a classical result of Auslander \cite[page 118]{Auslander}.
A similar statement in a slightly different setup
was already shown \cite[Proposition 2.26]{FIOS}, but since
our proof does not proceed via quotient stacks, we decided to give it 
nevertheless.

\begin{Proposition}
\label{prop: endoring}
With notations and assumptions as in Setup \ref{Setup},
$R$ is normal and the natural morphism
$$
S\ast G \,\to\, \End_R(S)
$$
is an isomorphism of  $R$-algebras.
\end{Proposition}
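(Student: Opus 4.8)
The plan is to reduce the statement to a computation at height-one primes of $R$, where the freeness (Galois) hypothesis lets one invoke Hopf--Galois theory, and to use reflexivity to propagate the isomorphism back from codimension one.

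First I would dispose of normality. Since $G$ is finite, $R\subseteq S$ is a finite extension of domains and $S=k[[x_1,\dots,x_n]]$ is a regular, hence normal, domain. Given $t\in\Frac(R)$ integral over $R$, it is integral over $S$ and lies in $\Frac(S)$, so $t\in S$ by normality of $S$. Writing $t=f/g$ with $f,g\in R$ and applying the coaction $\rho\colon S\to S\otimes_k A$ with $A=H^0(G,\calO_G)$, one has $\rho(t)\cdot(g\otimes 1)=\rho(tg)=\rho(f)=(f\otimes 1)=(t\otimes 1)(g\otimes 1)$; since $S\otimes_k A$ is a free $S$-module, $g\otimes 1$ is a non-zero-divisor, so $\rho(t)=t\otimes 1$, i.e.\ $t\in S^G=R$. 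Hence $R$ is normal.

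Next I would observe that both sides of the natural map $\phi\colon S\ast G\to\End_R(S)$ are reflexive $R$-modules. As $S$ is regular and finite over the local ring $R$, it is a maximal Cohen--Macaulay $R$-module, so $\depth_{R_{\mathfrak{p}}}S_{\mathfrak{p}}=\dim R_{\mathfrak{p}}$ for all $\mathfrak{p}$; in particular $S$ satisfies Serre's condition $S_2$ and is therefore reflexive over the normal domain $R$. Its underlying $R$-module $S\ast G\cong S\otimes_k H$ is a finite direct sum of copies of $S$, hence reflexive, and $\End_R(S)=\Hom_R(S,S)$ is reflexive because $\Hom$ into a reflexive module over a normal domain is reflexive. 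Now a homomorphism of finite reflexive modules over a normal domain is an isomorphism as soon as it is an isomorphism at every height-one prime, so it suffices to show that $\phi_{\mathfrak{p}}$ is an isomorphism for all $\mathfrak{p}$ of height one. Because $Z$ has codimension at least $2$ in $\Spec S$ and $\pi$ is finite, $\pi(Z)$ has codimension at least $2$ in $\Spec R$, so every height-one prime $\mathfrak{p}$ lies in $U$. For such $\mathfrak{p}$, the ring $R_{\mathfrak{p}}$ is a DVR, $S_{\mathfrak{p}}$ is free over $R_{\mathfrak{p}}$ (being MCM over a DVR), and by hypothesis the $H$-action on $S_{\mathfrak{p}}$ is Galois; the Hopf--Galois endomorphism theorem (cf.\ \cite{Montgomery}) then identifies the canonical map $S_{\mathfrak{p}}\# H\to\End_{R_{\mathfrak{p}}}(S_{\mathfrak{p}})$, which is $\phi_{\mathfrak{p}}$, as an isomorphism. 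As a sanity check, the generic rank of $S$ over $R$ equals the order $\dim_k H$ of $G$, so both $S\ast G$ and $\End_R(S)$ have rank $(\dim_k H)^2$, consistent with $\phi$ being an isomorphism.

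The hard part will be the Galois step: one must check that the module-algebra/comodule translations make the natural $R$-algebra homomorphism $\phi$ coincide \emph{on the nose} with the canonical map supplied by Hopf--Galois theory, and then that the Galois property together with projectivity of $S_{\mathfrak{p}}$ over $R_{\mathfrak{p}}$ forces this map to be bijective. Everything else---normality, the reflexivity of $S\ast G$ and $\End_R(S)$, and the reduction to codimension one---is formal and does not require $G$ to be linearly reductive.
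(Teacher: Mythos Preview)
Your proof is correct and follows essentially the same strategy as the paper: establish normality of $R$, show that both $S\ast G$ and $\End_R(S)$ are reflexive $R$-modules, reduce to height-one primes, and there invoke the Hopf--Galois endomorphism theorem (the paper cites Kreimer--Takeuchi \cite{KT}, which is the result behind \cite[Theorem 8.3.3]{Montgomery}). The only cosmetic differences are that the paper deduces normality from compatibility of invariants with localisation \cite{Skryabin} rather than by your direct coaction computation, and it cites \cite{Stacks} for the reflexivity statements rather than arguing via depth.
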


\begin{proof}
We let $H:=H^0(G,\calO_G)^*$ be the dual Hopf algebra
and then $R$ is the ring of invariants of $S$ with respect to the $H$-action.
Let $K$ be the field of fractions of $R$.
Since taking invariants is compatible with localisation \cite[Lemma 1.1]{Skryabin}, 
we conclude
$$
  R \,=\, R\,\cap\, (S\otimes_RK)^{H},
$$
which shows that $R$ is normal.

Therefore, $S$ is finite and reflexive as $R$-module \cite[Lemma 15.23.20]{Stacks}.
It then follows from \cite[Lemma 15.23.8]{Stacks} that $\End_R(S)$ is a reflexive $R$-module.
Since $S*G$ is a finite and free as an $S$-module, 
we conclude that $S\ast G$ 
is a reflexive $R$-module.

Since $S\ast G$ and $\End_R(S)$ both are reflexive $R$-modules,
it suffices to show that the natural homomorphism
$S\ast G\to \End_R(S)$ is an isomorphism
at each prime of height one of $R$.
When localising at primes of height one, the $H$-action is Galois because
$\rho$ is free in codimension one and then,
the statement follows from \cite[Theorem 1.7]{KT}, see also \cite[Theorem 8.3.3]{Montgomery}.
\end{proof}

The following generalises a result from the second-namend author \cite[Corollaries 3.3 and 6.18]{Yasuda} 
from finite groups to finite group schemes.

\begin{Theorem}
\label{thm: equivalences}
 We keep the notations and assumptions of Setup \ref{Setup}.
Let $\idealm\subseteq S$ be the maximal ideal and let 
 $\idealj\subseteq S\ast G$ be the Jacobson radical.
Then, the following are equivalent:
\begin{enumerate}
\item $R$ is a pure subring of $S$.
\item $R$ is strongly F-regular.
\item $G$ is linearly reductive.
\item $S\ast G$ has finite global dimension.
\item  $S\ast G$ has global dimension $n$.
\item $\idealj = \idealm  (S \ast G)$.
\item $\idealj \subseteq \idealm (S \ast G)$.
\end{enumerate}
In particular, if one of the above equivalent conditions holds, then $R$ is a Cohen-Macaulay ring. 
\end{Theorem}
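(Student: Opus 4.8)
The plan is to prove the seven conditions equivalent by combining a ``Hopf-theoretic'' cycle among (3)--(7) with an ``F-singularity'' block relating (1), (2), (3), and then to read off the Cohen--Macaulay property. The computational heart is the isomorphism of $k$-algebras
$$
  (S\ast G)/\idealm(S\ast G)\,\cong\, H,
$$
which holds because the closed point of $\Spec S$ is the unique $G$-fixed point, so $\idealm$ is $H$-stable and, since $S\ast G=S\otimes_k H$ as an $S$-module, reduction modulo $\idealm$ yields $H$. Throughout I use that $S\ast G$ is module-finite over the complete local ring $R$, hence semiperfect, so that $\idealj$ is defined and $(S\ast G)/\idealj$ is semisimple.

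First I would settle (3) $\Leftrightarrow$ (6) $\Leftrightarrow$ (7). The inclusion $\idealm(S\ast G)\subseteq\idealj$ always holds: modulo $\idealm_R(S\ast G)$ the ideal $\idealm(S\ast G)$ becomes a nilpotent (as $\idealm/\idealm_R S$ is nilpotent and $H$-stable) two-sided ideal of the finite-dimensional $k$-algebra $(S\ast G)/\idealm_R(S\ast G)$, hence lies in its radical. This gives (6) $\Leftrightarrow$ (7) at once. Since $H$-modules are exactly finite-dimensional $G$-representations, $G$ is linearly reductive iff $H$ is a semisimple algebra; by the displayed isomorphism this is equivalent to $\idealj=\idealm(S\ast G)$, i.e. to (6). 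Hence (3) $\Leftrightarrow$ (6) $\Leftrightarrow$ (7).

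Next, (6) $\Rightarrow$ (5) $\Rightarrow$ (4) I would prove with an equivariant Koszul resolution. The cotangent space $V:=\idealm/\idealm^2$ is an $n$-dimensional $G$-representation, and the Koszul complex
$$
  0\to S\otimes_k\Lambda^n V\to\cdots\to S\otimes_k V\to S\to k\to 0
$$
is $G$-equivariant, hence a complex of $S\ast G$-modules resolving the trivial module $k$. When $H$ is semisimple, every $S\otimes_k U$ with $U\in\mathrm{Rep}_k(G)$ is a projective $S\ast G$-module (indeed $S\ast G\otimes_H U\cong S\otimes_k U$ and $U$ is $H$-projective); tensoring the Koszul complex with a simple representation $W$ yields a projective resolution of length $n$. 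By (6) the simple $S\ast G$-modules are exactly these $W$, so $\mathrm{gl.dim}(S\ast G)\le n$, with equality from the nonvanishing of the $\Lambda^n V$ term; this is (5), and (5) $\Rightarrow$ (4) is trivial. The main obstacle is the reverse arrow (4) $\Rightarrow$ (3). Here I would observe that $H=(S\ast G)/\idealm(S\ast G)$ admits a finite free $S\ast G$-resolution (again from the $S$-freeness of $S\ast G$ and the length-$n$ Koszul resolution of $k$ over $S$), and then use a change-of-rings comparison to bound $\mathrm{gl.dim}(H)$ by $\mathrm{gl.dim}(S\ast G)<\infty$. Since a finite-dimensional Hopf algebra is Frobenius, hence self-injective, and a self-injective algebra of finite global dimension is semisimple, this forces $H$ semisimple, i.e. (3). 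The delicate point is precisely that $\idealm(S\ast G)$ is not generated by central elements ($x_1,\dots,x_n$ do not commute with $H$), so the elementary Rees-type change-of-rings theorem does not apply directly and one must exploit the $S$-freeness and $G$-equivariance, following the line of \cite[Corollaries 3.3 and 6.18]{Yasuda}.

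Finally, for the F-singularity block and the Cohen--Macaulay conclusion: (3) $\Rightarrow$ (1) because linear reductivity makes the invariants functor exact, so the Reynolds operator is an $R$-linear retraction of $R\hookrightarrow S$, whence $R$ is a direct summand and a fortiori pure. Since $S$ is regular, hence strongly F-regular, and a pure (equivalently, being module-finite, split) subring of a strongly F-regular ring is strongly F-regular, (1) $\Rightarrow$ (2) follows; and (2) $\Rightarrow$ (1) holds because strongly F-regular rings are splinters, so $R\hookrightarrow S$ splits. To close the block I would prove (1) $\Rightarrow$ (3): purity of $R\subseteq S$ produces, by a Maschke-type argument, a normalized integral in $H$ (the splitting averages to a Reynolds operator), which is equivalent to $H$ being semisimple; this is the F-theoretic input generalizing \cite{Broer,Yasuda}. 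For the last clause, under the equivalent conditions $R$ is a direct summand of $S$; as $S$ is regular and module-finite over $R$ it is a maximal Cohen--Macaulay $R$-module, and a direct summand of a maximal Cohen--Macaulay module is again maximal Cohen--Macaulay, so $R$ is Cohen--Macaulay.
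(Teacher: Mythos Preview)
Your overall architecture is sound and in several places more explicit than the paper's. The identification $(S\ast G)/\idealm(S\ast G)\cong H$ together with the equivariant Koszul complex gives a clean proof of the cycle $(3)\Leftrightarrow(6)\Leftrightarrow(7)$ and of $(6)\Rightarrow(5)\Rightarrow(4)$; by contrast the paper quotes results of Lam, Anderson--Fuller, Yang, and Leuschke at these points, and deduces $(7)\Rightarrow(4)$ by a Frobenius-pullback argument from \cite{Yasuda} rather than via your route $(7)\Rightarrow(6)\Rightarrow(5)$. The block $(3)\Rightarrow(1)$, $(1)\Leftrightarrow(2)$ and the Cohen--Macaulay conclusion are fine and essentially agree with the paper.

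Two steps, however, are not adequately justified. For $(4)\Rightarrow(3)$, the bound $\mathrm{gl.dim}\,H\le\mathrm{gl.dim}(S\ast G)$ does not follow from the mere existence of a finite free $S\ast G$-resolution of $H$: the surjection $k[x]\twoheadrightarrow k[x]/(x^{2})$ already shows this fails. Your sketch can be repaired, but only by using the structure you hint at: since $H\hookrightarrow S\ast G$ with $S\ast G$ free as a left $H$-module, every $S\ast G$-projective restricts to an $H$-projective, and the $G$-equivariant augmentation $S\twoheadrightarrow S/\idealm=k$ makes each $H$-module $M$ an $H$-summand of $S\otimes_{k}M$; together these give $\mathrm{pd}_{H}(M)\le\mathrm{pd}_{S\ast G}(S\otimes_{k}M)\le\mathrm{gl.dim}(S\ast G)$, whence $H$ is semisimple. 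The paper argues quite differently here, via Yang's integral criterion: finiteness of $\mathrm{gl.dim}(S\#H)$ and faithfulness of $S$ force $tc=1$ for a left integral $t$ and some $c\in S$, and reduction modulo $\idealm$ then gives $\mathrm{gl.dim}\,H=0$. More seriously, your $(1)\Rightarrow(3)$ is essentially an assertion. Saying that ``the splitting averages to a Reynolds operator'' is circular: averaging an $R$-linear retraction to a $G$-equivariant one already requires a normalized integral, i.e.\ linear reductivity. I do not see a genuine Maschke-type shortcut here, and none is supplied. The paper avoids this implication entirely: instead of $(1)\Rightarrow(3)$ it proves $(1)\Rightarrow(4)$, observing that $\End_{R}(S)\cong S\ast G$ is free over $S$, hence Cohen--Macaulay over $R$, and then invoking the purity criterion \cite[Corollary~2.11]{Yasuda} to conclude that $\End_{R}(S)$ is an NCCR, in particular of finite global dimension. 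You should either produce a real argument for $(1)\Rightarrow(3)$ or reroute through $(1)\Rightarrow(4)$ as the paper does.
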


\begin{proof}
$(1)\Rightarrow(2):$
This is \cite[Theorem 3.1]{HH}.

$(2)\Rightarrow(1):$
This follows from the fact that a strongly F-regular ring is a splinter, see,
for example, \cite[Proposition 1.4]{Hara}.

$(3)\Rightarrow (4):$
If $G$ is linearly reductive, then $H:=H^0(G,\calO_G)^*$ is semisimple,
see, for example, \cite[Proposition A.2]{Liedtke}.
By \cite[Corollary 4.2]{Yang}, we have
that $\mathrm{gl.dim}(S*G)=\mathrm{gl.dim}(S\#H)$ is finite.

$(4)\Rightarrow(3):$
From the previous proposition, the natural map $S*G \to \End _R (S)$ is an isomorphism.
This implies that $S$ is a faithful $S\ast G$-module.

Let $H:=H^0(G,\calO_G)^*$ be the dual Hopf algebra.
Let $0\neq t\in\int_H^\ell$ be a left integral.
By \cite[Corollary 2.3]{Yang}, there exists a $c\in S$ with $tc=1$.
If $\idealm_S=(x_1,...,x_n)$ denotes the maximal ideal of $S$, we let
$c':=\overline{c}\in S/\idealm_S=k$.
Since $tc=1$ in $S$ we also have $tc'=1$ in $k$.
Applying \cite[Corollary 2.3]{Yang} to $(S/\idealm_S)\#H=H$, we conclude
$\mathrm{gl.dim}(H)=0$.
This implies that $H$ is semi-simple
(see, for example, \cite[Theorem 4.2.2]{Weibel})
and using \cite[Proposition A.2]{Liedtke}, we conclude that
$G$ is linearly reductive.

$(3)\Rightarrow(1):$
By \cite[proof of Corollary 1.8]{Satriano}, 
we may assume that the $G$-action on $S$ is linear.
The statement then follows from \cite[Remark 6.5.3(b)]{BH}.

$(1)\Rightarrow(4):$
We have $S\ast G\cong \End_R(S)$ by Proposition \ref{prop: endoring}. 
In particular, $\End_R(S)$ is free and hence, Cohen-Macaulay as an $S$-module. 
By \cite[Proposition (1,8)]{Yoshino}, it is also Cohen-Macaulay as an $R$-module. 
By \cite[Corollary 2.11]{Yasuda} and Assumption (1), it is 
an NCCR and in particular, it has finite global dimension.

$(4)\Rightarrow(5):$
See \cite[Proposition 12.7]{Leuschke}.

$(5)\Rightarrow(4):$
Obvious.

$(6)\Rightarrow(7):$
Obvious.

$(3)\Rightarrow(6):$
We have $J(S*G)\supseteq \idealm(S \ast G)$ by \cite[(5.9)]{LamFirstCourse}. 
On the other hand, since $k*G$ is semisimple, we have
$$
J\left(S*G/(\idealm (S \ast G))\right) \,=\, J(k*G) \,=\, 0.   
$$
We also have $J(S*G) \subseteq \idealm (S \ast G)$ by \cite[15.6]{AF},
which proves equality.

$(7)\Rightarrow(4):$
We consider the pullback functor $\mathbf{F}^*$ from the module category of $S*G$ 
to the module category of $S^{1/p}*G$ that is induced by the inclusion 
$S\ast G \hookrightarrow S^{1/p}\ast G$. 
Note that the same proof as the one of \cite[Proposition 6.19]{Yasuda} 
shows that this functor is identical to the functor 
$$
  \Hom_R (S,S^{1/p}) \otimes - \quad : \quad
  \End_R(S)\text{-mod} \,\to\, \End_{R^{1/p}}(S^{1/p})\text{-mod}.
$$
By \cite[Proposition 6.17]{Yasuda}, this functor is exact, preserves projective modules,
and has zero kernel. 
By \cite[Proposition 6.13 or Corollary  6.18]{Yasuda}, the functor is also order-raising. 
Thus, $S\ast G$ has finite global dimension by \cite[Corollary 5.6]{Yasuda}.

This completes the proof of the claimed equivalences. 
The last assertion of the theorem follows, for example, from \cite[Theorem 2.6]{HH}.
\end{proof}

\begin{Remark}
  If $R$ is strongly F-regular, then it is log terminal \cite{HaraWatanabe}. 
  Thus, in the situation of the above theorem, if $G$ is linearly reductive, 
  then the quotient singularity $\Spec R^G$ is log terminal. 
  If a Noetherian local ring $R$ of equicharacteristic zero admits an NCCR, then 
  $X=\Spec R$ is log terminal, see \cite{StaffordVanDenBergh, IngallsYasuda}, as well as
   \cite{DaoIyamaTakahashiWemyss}.
   On the other hand, there are quotient singularities in positive characteristic that 
   are not log terminal, see, for example \cite{YasudaDiscrepancy}.
\end{Remark}

\begin{Remark}
  In the situation of Theorem \ref{thm: equivalences}, even if $R$ is not strongly 
  F-regular, then it can be F-pure \cite[Section 3b]{HSY} (see also \cite[Section 4]{ArtinRDP} and \cite[page 64]{ArtinWild}) 
  or F-rational \cite{HashimotoModular}. 
  There is also the case where $R$ is neither log-canonical nor Cohen-Macaulay \cite{YasudaDiscrepancy} -
  in particular, it is neither F-pure nor F-rational.
\end{Remark}

\begin{Corollary}
\label{cor: Morita}
 With notations and assumptions as in Theorem \ref{thm: equivalences}, 
 assume that the equivalent statements hold.
 Then, 
 \begin{enumerate}
 \item $\End_R(S)$ is an NCCR of $R$.
 \item For sufficiently large $e$,
 $$
    \End_R(R^{1/p^e})
 $$
 is Morita equivalent to $\End_R(S)$ and thus, it is also an NCCR of $R$.
 \end{enumerate}
\end{Corollary}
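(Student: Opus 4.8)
For assertion (1) the plan is only to read it off Definition~\ref{def-NCCR-intro}, since the ingredients are already assembled inside the proof of Theorem~\ref{thm: equivalences}. By Proposition~\ref{prop: endoring}, $S$ is a nonzero finite reflexive $R$-module and $\End_R(S)\cong S\ast G$; moreover $R$ is a complete local ring with $\dim R=\dim S=n$, and under the standing hypotheses Theorem~\ref{thm: equivalences} yields both that $R$ is Cohen--Macaulay and that $\mathrm{gl.dim}(S\ast G)=n=\dim R$. The one remaining point is that $\End_R(S)$ is a Cohen--Macaulay $R$-module: since $S\ast G=S\#H$ is free over $S$ of rank $\dim_k H$, it is Cohen--Macaulay over the regular ring $S$, hence Cohen--Macaulay over $R$ by \cite[Proposition (1,8)]{Yoshino}. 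By Definition~\ref{def-NCCR-intro} this is exactly an NCCR.

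For assertion (2) the plan rests on the principle that two reflexive modules with the same additive closure have Morita equivalent endomorphism rings, so I would reduce everything to the claim that $\mathrm{add}_R(R^{1/p^e})=\mathrm{add}_R(S)$ for $e\gg0$. Granting this, the projectivization functors $\Hom_R(R^{1/p^e},-)$ and $\Hom_R(S,-)$ identify the finitely generated projectives over $\End_R(R^{1/p^e})$ and over $\End_R(S)=S\ast G$ with one and the same additive category $\mathrm{add}_R(S)$, which produces the Morita equivalence. The NCCR property then transports for free: global dimension is a Morita invariant, so it equals $n=\dim R$; and since $R^{1/p^e}$ is a direct summand of some $S^{\oplus m}$, the ring $\End_R(R^{1/p^e})$ is an $R$-module direct summand of $\mathrm{M}_m(\End_R(S))$ and is therefore Cohen--Macaulay over $R$.

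To identify $\mathrm{add}_R(R^{1/p^e})$ I would first linearise the action, as in the proof of $(3)\Rightarrow(1)$ of Theorem~\ref{thm: equivalences} via \cite{Satriano}, and write $S^{1/p^e}=k[[y_1,\dots,y_n]]$ with $y_i=x_i^{1/p^e}$ and $G$ acting linearly on the $y_i$. Because the $p^e$-th power map is additive in characteristic $p$, the ideal $\idealm S^{1/p^e}=(y_1^{p^e},\dots,y_n^{p^e})$ is $G$-stable, so $U_e:=S^{1/p^e}/\idealm S^{1/p^e}$ is a finite-dimensional $G$-representation. Choosing a $G$-equivariant $k$-linear splitting of $S^{1/p^e}\twoheadrightarrow U_e$ (possible since every representation of the linearly reductive $G$ is semisimple) and extending it $S$-linearly yields a $G$-equivariant $S$-linear surjection $S\otimes_k U_e\to S^{1/p^e}$ between free $S$-modules of equal rank $p^{en}$, hence an isomorphism of $S\ast G$-modules. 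In particular $S^{1/p^e}$ is a projective $S\ast G$-module, and applying the equivalence $\calP\xrightarrow{\sim}\mathrm{add}_R(S)$, $P\mapsto P^G$, of Theorem~\ref{thm: intro3} (together with $R^{1/p^e}=(S^{1/p^e})^G$, which holds because an element is $G$-invariant if and only if its $p^e$-th power is) gives
$$
  R^{1/p^e}\,\cong\,(S\otimes_k U_e)^G\,=\,\bigoplus_i M_i^{\oplus c_{i,e}}\,\in\,\mathrm{add}_R(S),
$$
where $M_i=(S\otimes_k W_i)^G$ are the indecomposables and $c_{i,e}$ is the multiplicity of the simple $W_i$ in $U_e$.

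It remains to prove that $c_{i,e}>0$ for every $i$ once $e$ is large, which is the heart of the matter. Writing $V_e:=\langle y_1,\dots,y_n\rangle$ for the $G$-representation spanned by the coordinates, the relations $y_j^{p^e}$ of $U_e$ sit in degree $p^e$, so the truncation $\bigoplus_{d<p^e}\mathrm{Sym}^d(V_e)$ embeds into $U_e$ as a subrepresentation; hence it suffices that every simple $G$-representation occurs in the symmetric algebra $\mathrm{Sym}^\bullet(V_e)$, for then one takes $p^e$ larger than the finitely many degrees in which the finitely many simples first appear. Now $V_e$ is a faithful representation --- its kernel coincides with that of the defining action, which is faithful because the $G$-action is free in codimension one --- so the required input is that every irreducible representation of $G$ occurs in the symmetric algebra of a faithful representation. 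The main obstacle is precisely this representation-theoretic fact: for a finite group it is classical (provable by a Molien-series argument), and I would extend it to the linearly reductive group-scheme setting using the decomposition $H^0(G,\calO_G)\cong\bigoplus_i W_i^{\oplus\dim W_i}$ of the regular representation together with the closed immersion $G\hookrightarrow\GL(V_e)$; alternatively, one may invoke the group-scheme form of this stabilisation already present in \cite{TY, Yasuda}.
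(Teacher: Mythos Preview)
Your argument for (1) is correct and essentially the paper's own: it simply refers back to the proof of Theorem~\ref{thm: equivalences}, where the Cohen--Macaulay and global-dimension checks are already made.

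For (2), however, there is a genuine gap once $G$ is not \'etale. You need $S^{1/p^e}$ to be an $S\ast G$-module, which amounts to a $G$-action on $S^{1/p^e}$ extending the one on $S$ along the inclusion $S\hookrightarrow S^{1/p^e}$. Such an extension does not exist when $G$ has infinitesimal part: for $G=\bmu_p$ acting on $x_i$ with nonzero weight $a_i$, any coaction on $y_i=x_i^{1/p^e}$ compatible with $x_i\mapsto x_i\otimes t^{a_i}$ would force $t^{a_i}$ to be a $p^e$-th power in $k[\bmu_p]=k[t]/(t-1)^p$, but the image of Frobenius on that Hopf algebra is just $k\cdot1$. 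Consequently your identification $S^{1/p^e}\cong S\otimes_kU_e$ of $S\ast G$-modules, and with it the computation of $R^{1/p^e}$ via Theorem~\ref{thm: intro3}, collapses. (The side claim ``$s$ is $G$-invariant iff $s^{p^e}$ is'' likewise fails for $\bmu_p$, since every $p$-th power is invariant.) Your argument does go through when $G$ is a finite group of order prime to $p$, and is then essentially the approach of \cite{TY}. The paper instead cites \cite[Corollary~4.2]{Yasuda}, whose only input is the purity of $R\subseteq S$ inside a regular local ring and which never needs $G$ to act on $S^{1/p^e}$; your closing parenthetical points in exactly this direction and is the correct fix.
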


\begin{proof}
Assertion (1) was already shown in the proof of the theorem.
Assertion (2) follows from \cite[Corollary 4.2]{Yasuda}.
\end{proof}

\section{Auslander's results and dimension two}
\label{sec: Auslander}

We keep the notations and assumptions of Setup \ref{Setup} and assume moreover
that $G$ is linearly reductive.
In this section, we show that the category $\calP$ of projective $S\ast G$-modules is equivalent to
the category $\mathrm{Rep}_k(G)$ of $G$-representations, 
as well as to the category $\mathrm{add}_R(S)$ generated by direct summands of the $R$-module $S$.
If moreover $n=2$, then these are equivalent to the category $\calL$ of reflexive $R$-modules.
This generalises classical results of Auslander \cite{AuslanderRS}.

\subsection{Auslander's results}

In \cite[Section 1]{AuslanderRS}, the following proposition is shown in the case where $n=2$ 
and where $G$ is a finite group, whose order is prime to the characteristic of $k$.

\begin{Proposition}
 \label{prop: auslander1}
 We keep the notations and assumptions of Setup \ref{Setup}, let
  $\idealm\subseteq S$ be the maximal ideal, and assume that $G$ is
  linearly reductive.
 We define the two categories
 $$
\begin{array}{lcl}
\mathrm{Rep}_k(G) & : & \mbox{finite-dimensional $k$-linear $G$-representations}\\
\calP & \qquad : \qquad& \mbox{finite and projective $S\ast G$-modules},
\end{array}
$$
which are related as follows.
 \begin{enumerate}
   \item If $W\in\mathrm{Rep}_k(G)$, then
    there is a natural $S\ast G$-module structure on $S\otimes_k W$ that extends the $S$-action on $S$ and
    the $G$-action on $W$.
    Moreover, $S\otimes_k W$ is a finite and projective $S\ast G$-module, that is, lies in $\calP$.
    \item If $P$ is a finite $S\ast G$-module, then $P/\idealm P$ is a finite-dimensional
    and $k$-linear $G$-representation, that is, lies in $\mathrm{Rep}_k(G)$.
    \item If $P\in\calP$, then there exists an isomorphism
    of $S\ast G$-modules
    $$
       P \,\cong\, S\otimes_k (P/\idealm P).
    $$
   \item The functor
   $$
     \begin{array}{ccc}
        \mathrm{Rep}_k(G) &\to&  \calP \\
        W&\mapsto&S\otimes_kW
      \end{array}
    $$
    induces an equivalence of categories.
   Simple $G$-representations correspond to 
   indecomposable $S\ast G$-modules under this equivalence.
   \end{enumerate}
\end{Proposition}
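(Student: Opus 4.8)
The plan is to prove the four assertions in turn, using throughout that linear reductivity of $G$ is equivalent to semisimplicity of the dual Hopf algebra $H := H^0(G,\calO_G)^\ast$ (cf.\ \cite[Proposition A.2]{Liedtke}), and that by Theorem \ref{thm: equivalences} the Jacobson radical of $S\ast G = S\# H$ equals $\idealm(S\ast G)$. For (1) I would first record that an $S\# H$-module is the same datum as an $S$-module carrying a compatible $H$-action, the compatibility being $h\cdot(s\cdot m)=\sum (h_1\cdot s)(h_2\cdot m)$ in Sweedler notation. Then the formula $h\cdot(s\otimes w)=\sum (h_1\cdot s)\otimes (h_2 w)$, together with multiplication on the left tensor factor, makes $S\otimes_k W$ into such a module, and checking the compatibility is a direct Hopf-algebra computation. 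To see projectivity I would identify the functor $F := S\otimes_k(-)$ with induction $\mathrm{Ind}_H^{S\ast G}=(S\ast G)\otimes_H(-)$ along $H\hookrightarrow S\ast G$, $h \mapsto 1\# h$, using that $S\ast G$ is free as a right $H$-module; since $\mathrm{Res}$ is exact, $F$ preserves projectives, and as $H$ is semisimple every $W$ is $H$-projective, whence $F(W)\in\calP$. Finiteness is clear from $\dim_k W<\infty$.

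For (2): since $\Spec S$ is local, the $G$-action fixes its closed point, so $\idealm$ is an $H$-submodule of $S$ and $S\to S/\idealm=k$ is $H$-linear with $k$ the trivial representation. As $S\ast G$ is finite over $S$, a finite $S\ast G$-module $P$ is finite over $S$; the module-algebra identity shows $\idealm P$ is an $S\ast G$-submodule, so $P/\idealm P$ is an $S\ast G$-module on which $S$ acts through $k$, that is, a module over $(S/\idealm)\ast G=k\ast G=H$, and hence a $G$-representation. It is finite-dimensional by Nakayama over the local ring $S$.

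Assertion (3) is the step I expect to be the crux. Being projective over $S\ast G$, $P$ is a summand of a free module and hence free, in particular flat, over $S$. Regarding $0\to\idealm P\to P\to W\to 0$ with $W:=P/\idealm P$ as a short exact sequence of $H$-modules and using semisimplicity of $H$, I choose an $H$-linear splitting $\sigma\colon W\to P$ of the projection $P\twoheadrightarrow W$. Extending by $s\otimes w\mapsto s\,\sigma(w)$ gives an $S\ast G$-linear map $\tilde\sigma\colon S\otimes_k W\to P$ reducing to the identity modulo $\idealm$, so Nakayama makes it surjective. Writing $N:=\ker\tilde\sigma$ and tensoring $0\to N\to S\otimes_kW\to P\to 0$ with $k$ over $S$ — legitimate since $\mathrm{Tor}_1^S(P,k)=0$ by flatness — yields $N/\idealm N\hookrightarrow W\xrightarrow{\ \cong\ }P/\idealm P$, forcing $N/\idealm N=0$ and hence $N=0$ by Nakayama. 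Thus $\tilde\sigma$ is an isomorphism. The delicate point is the interplay of the two ingredients — semisimplicity of $H$ to produce $\sigma$, and $S$-flatness of $P$ to promote the surjection to an isomorphism; neither alone suffices, and indeed (3) fails for non-projective finite $P$.

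Finally, for (4) I would assemble the previous parts. The assignments $F=S\otimes_k(-)$ and $E=(-)/\idealm$ are mutually inverse at the level of isomorphism classes: $E\circ F\cong\mathrm{id}$ via the computation $(S\otimes_kW)/\idealm\cong W$ of representations, and $F\circ E\cong\mathrm{id}$ on objects by (3). For the matching of simples with indecomposables I would invoke Theorem \ref{thm: equivalences}, which gives $\idealj=\idealm(S\ast G)$, so that $P/\idealm P$ is exactly the top $P/\idealj P$; since $S\ast G$ is semiperfect (its base $S$ being complete local), $P\in\calP$ is indecomposable if and only if its top is a simple $H$-module, that is, $E(P)$ is a simple $G$-representation, and conversely $F$ sends a simple $W$ to a module with simple top $W$ and hence indecomposable. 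Krull--Schmidt then upgrades the bijection on objects to the asserted equivalence, with simple representations corresponding to indecomposable modules.
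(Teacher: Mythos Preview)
Your proof is correct and close in spirit to the paper's, but you take a more hands-on route for the key step (3). The paper dispatches (3) and (4) in one stroke by observing that $S\ast G$ is semiperfect (\cite[(23.3)]{LamFirstCourse}) with Jacobson radical $\idealm(S\ast G)$ (Theorem \ref{thm: equivalences}), and then invoking \cite[Proposition 27.10]{AF}, which for any semiperfect ring gives a bijection between indecomposable projectives and simples over the top; the observation $(S\otimes_k W)/\idealm(S\otimes_k W)\cong W$ then identifies $W\mapsto S\otimes_k W$ as the inverse (i.e.\ as the projective cover). You instead construct the isomorphism $P\cong S\otimes_k(P/\idealm P)$ directly: split $P\twoheadrightarrow P/\idealm P$ over the semisimple Hopf algebra $H$, extend $S$-linearly, and finish with Nakayama plus $S$-flatness of $P$. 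This buys you a self-contained argument that does not appeal to the structure theory of semiperfect rings, and makes transparent exactly where linear reductivity (semisimplicity of $H$) enters; the paper's approach is shorter and packages the lifting of idempotents into a single citation. One small point: your final sentence in (4) asserts that Krull--Schmidt upgrades the bijection on indecomposables to an equivalence of categories, but this requires full faithfulness of $F$, which you have not checked; the paper is equally terse here, effectively proving only the bijection on isomorphism classes, so you are not missing anything relative to the paper.
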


\begin{proof}
We have that $S*G$ is semiperfect by \cite[(23.3)]{LamFirstCourse}.
The Jacobson radical of $S*G$ is equal to $\idealm*G$
by Theorem \ref{thm: equivalences}.
The map that sends $P$ to $P/\idealm P$ defines a bijection from isomorphism classes of indecomposable 
projective $S*G$-modules to isomorphism classes of simple $k*G$-modules by \cite[Proposition 27.10]{AF}.
Since 
$$
 \left( S\otimes _k (P/\idealm P) \right) / \idealm \left(S\otimes _k (P/\idealm P)\right) \,=\, 
 P/\idealm P,
$$
the map $W\mapsto S\otimes_k W$ is the inverse of the above bijection. 
\end{proof}

We continue with the following result, 
which in similar contexts is sometimes called \emph{Auslander's projectivisation}.

\begin{Proposition}
\label{prop: auslander3}
 We keep the assumptions and notations of Proposition \ref{prop: auslander1}
 and define the category
 $$
\begin{array}{lcl}
 \mathrm{add}_R(S) & \qquad : \qquad&  \mbox{summands of finite sums of $S$}.
\end{array}
$$
Then, the functor
$$
\begin{array}{ccc}
\calP &\to&  \mathrm{add}_R(S) \\
P&\mapsto&P^G
\end{array}
$$
induces equivalences of categories.
\end{Proposition}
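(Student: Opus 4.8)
The plan is to realize the functor $P\mapsto P^G$ as a $\Hom$-functor and then reduce its two essential properties, full faithfulness and the computation on the free module, to Proposition \ref{prop: endoring}. First I would observe that for $P\in\calP$ there is a natural identification $P^G\cong\Hom_{S\ast G}(S,P)$, where $S$ carries the $S\ast G$-module structure attached to the trivial representation $k$ by Proposition \ref{prop: auslander1}(1): an $S\ast G$-linear map $S\to P$ is determined by the image of the $G$-invariant element $1\in S$, which must lie in $P^G$, and this assignment is bijective. Since $\End_{S\ast G}(S)=(S\otimes_k\Hom_k(k,k))^G=S^G=R$, the functor $\Phi:=\Hom_{S\ast G}(S,-)$ takes values in $R$-modules and coincides with $(-)^G$.

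Second, I would compute $\Phi$ on the free module. Under the equivalence of Proposition \ref{prop: auslander1} the module $S\ast G$ corresponds to $(S\ast G)/\idealm(S\ast G)=k\ast G=H$ with $H=H^0(G,\calO_G)^\ast$, that is, to the left regular module of $H$; hence $S\ast G\cong S\otimes_k H$ as $S\ast G$-modules. The tensor identity for the regular module $H$ then yields a natural isomorphism $(S\otimes_k H)^G\cong S$, so $\Phi(S\ast G)\cong S$. Because $\Phi$ preserves finite direct sums and direct summands, this already shows $\Phi(\calP)\subseteq\mathrm{add}_R(S)$, so $\Phi=(-)^G$ is a well-defined functor $\calP\to\mathrm{add}_R(S)$.

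Third, and this is the heart of the matter, I would prove that $\Phi$ is fully faithful. Both $(P,Q)\mapsto\Hom_{S\ast G}(P,Q)$ and $(P,Q)\mapsto\Hom_R(\Phi P,\Phi Q)$ are additive bifunctors and $\Phi$ is a natural transformation between them, so since every object of $\calP$ is a direct summand of a finite free module $(S\ast G)^{\oplus m}$, it suffices to check that $\Phi$ induces an isomorphism on $\End_{S\ast G}(S\ast G)$. Tracing the isomorphism $\Phi(S\ast G)\cong S$ through, the induced map $\End_{S\ast G}(S\ast G)\cong S\ast G\to\End_R(S)$ should be identified, up to the opposite-algebra convention, with the natural homomorphism of Proposition \ref{prop: endoring}, which is an isomorphism. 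I expect this identification to be the main obstacle: one must match the endomorphism produced by $\Phi$, built from right multiplication on $(S\ast G)^G$, with the natural $R$-algebra map $S\ast G\to\End_R(S)$, keeping careful track of the Hopf-algebraic conventions (left versus right actions, the counit, and the integral used in the tensor identity). An alternative I would fall back on, should this bookkeeping prove awkward, is to observe that both Hom-modules are reflexive over the normal ring $R$ and that the $G$-action is free in codimension one; one may then verify the isomorphism after localising at each height-one prime, where the action is Galois, exactly as in the proof of Proposition \ref{prop: endoring}.

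Finally, essential surjectivity is formal. Any object $N$ of $\mathrm{add}_R(S)$ is a direct summand of some $S^{\oplus n}\cong\Phi\bigl((S\ast G)^{\oplus n}\bigr)$, cut out by an idempotent of $\End_R(S^{\oplus n})$; full faithfulness lets me lift it to an idempotent of $\End_{S\ast G}\bigl((S\ast G)^{\oplus n}\bigr)$, whose image $P\in\calP$ then satisfies $\Phi(P)\cong N$. Combined with full faithfulness, this shows that $\Phi=(-)^G$ is an equivalence of $\calP$ onto $\mathrm{add}_R(S)$, which is the assertion.
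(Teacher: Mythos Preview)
Your argument is correct and is essentially an unpacking of what the paper simply cites: the paper's proof consists of a reference to \cite[Proposition 2.4]{Leuschke} (Auslander's projectivisation), together with the remark that the underlying \cite[Proposition II.2.1]{ARS} goes through for Noetherian rings. That general result says that for a module $M$ over a ring $R$, the functor $\Hom_R(M,-)$ gives an equivalence $\mathrm{add}_R(M)\simeq\mathrm{proj}(\End_R(M))$; applied with $M=S$ and using $\End_R(S)\cong S\ast G$ from Proposition \ref{prop: endoring}, this is exactly the equivalence in question, and your identification $(-)^G\cong\Hom_{S\ast G}(S,-)$ exhibits the stated functor as its inverse. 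So your route and the paper's are the same in substance; you have simply written out the projectivisation argument (check on the free module, full faithfulness via $\End_{S\ast G}(S\ast G)\cong S\ast G\cong\End_R(S)$, lift idempotents) rather than citing it, and correctly flagged the opposite-algebra bookkeeping as the only delicate point.
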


\begin{proof}
See, for example, \cite[Proposition 2.4]{Leuschke}.
Note that this proposition relies on \cite[Proposition II.2.1]{ARS}, which is stated for Artinian rings only. 
However, the elementary proof there also works for Noetherian rings, which is sufficient for our purposes.
\end{proof}

Under this equivalence, the regular (resp. trivial) representation of $G$ in $\mathrm{Rep}_k(G)$
gets mapped to $S$ (resp. $R$).
Let $\rho_i:G\to\GL(V_i)$ be the set of finite-dimensional, $k$-linear, and simple representations
of $G$ up to isomorphism.
We have the well-known decomposition of the regular representation 
\begin{equation}
\label{eq: regrep}
\rho_{\mathrm{reg}} \,\cong\, \bigoplus_i\, \rho_i{}^{\oplus \dim_k\rho_i},
\end{equation}
see, for example, 
\cite[Theorem 5.9, Corollary 5.11 and Remark 5.12]{EH}.
Alternatively, one can also use the lifting result \cite[Proposition 2.9]{LMM} together with 
the decomposition \eqref{eq: regrep} of the regular representation of the finite group $G_{\mathrm{abs}}$.
Using this decomposition and Proposition \ref{prop: auslander1}, we conclude that
$$
   S \,=\, \bigoplus_i\,  \left( (S\otimes_k V_i)^G \right)^{\oplus\dim_k\rho_i}
$$
is the decomposition of $S$ into indecomposable and reflexive $R$-modules.
Applying Proposition \ref{prop: endoring} to this, we conclude the following.

\begin{Corollary}
 Under the assumptions of Proposition \ref{prop: auslander3}, there exists an isomorphism
 of $R$-algebras
$$
 S\ast G \,\cong\, \End_R\left(  \bigoplus_i \left( (S\otimes_k V_i)^G \right)^{\oplus\dim_k\rho_i} \right).
$$
\end{Corollary}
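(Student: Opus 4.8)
The plan is to combine Proposition~\ref{prop: endoring}, which provides an isomorphism $S\ast G\cong\End_R(S)$ of $R$-algebras, with the decomposition of $S$ into indecomposable reflexive summands established in the discussion preceding the corollary. Once $S$ is expressed as such a direct sum, the asserted isomorphism follows simply by transporting it through $\End_R(-)$.

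In more detail, the composite additive equivalence
$$
\mathrm{Rep}_k(G)\,\to\,\calP\,\to\,\mathrm{add}_R(S),\qquad W\,\mapsto\,(S\otimes_k W)^G,
$$
from Propositions~\ref{prop: auslander1} and~\ref{prop: auslander3} carries the regular representation $\rho_{\mathrm{reg}}$ to $S$, because $(S\otimes_k\rho_{\mathrm{reg}})^G\cong S$ as $R$-modules. Applying this additive functor to the decomposition~\eqref{eq: regrep} of $\rho_{\mathrm{reg}}$ then gives
$$
S\,\cong\,\bigoplus_i\left((S\otimes_k V_i)^G\right)^{\oplus\dim_k\rho_i}
$$
as $R$-modules, the summands being pairwise non-isomorphic and indecomposable since the equivalence sends the distinct simple representations $V_i$ to indecomposables.

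Finally I would substitute this decomposition into the isomorphism of Proposition~\ref{prop: endoring}, yielding
$$
S\ast G\,\cong\,\End_R(S)\,\cong\,\End_R\left(\bigoplus_i\left((S\otimes_k V_i)^G\right)^{\oplus\dim_k\rho_i}\right),
$$
which is exactly the claim. I expect the only step demanding real care to be the identification $(S\otimes_k\rho_{\mathrm{reg}})^G\cong S$; this I would verify directly from the coaction, using that $\rho_{\mathrm{reg}}=H^0(G,\calO_G)$ with its translation action and that taking $G$-invariants of $S\otimes_k H^0(G,\calO_G)$ recovers $S$ by a standard Hopf-algebraic computation. The remaining steps are purely formal, consisting of the additivity of the equivalence and the functoriality of $\End_R(-)$.
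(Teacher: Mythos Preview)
Your proposal is correct and follows precisely the paper's own argument: the decomposition of $S$ as $\bigoplus_i\bigl((S\otimes_k V_i)^G\bigr)^{\oplus\dim_k\rho_i}$ is established in the text immediately preceding the corollary (using the equivalence $\mathrm{Rep}_k(G)\to\mathrm{add}_R(S)$ applied to \eqref{eq: regrep}), and the paper then simply says ``Applying Proposition~\ref{prop: endoring} to this, we conclude the following'' with no further proof. Your extra care about the identification $(S\otimes_k\rho_{\mathrm{reg}})^G\cong S$ is a detail the paper asserts without justification, so you are if anything slightly more thorough.
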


\subsection{Dimension 2}
Now, we specialise further to the case $n=2$.
Then, $R$ is normal and two-dimensional, and thus, a finite $R$-module is reflexive if and only if it
is Cohen-Macaulay, see \cite[Proposition 1.4.1]{BH}.
In particular, $S$ is a reflexive $R$-module
and thus, the objects of $\mathrm{add}_R(S)$ are reflexive $R$-modules.

\begin{Proposition}
\label{prop: auslander2}
  Under the assumptions and notations of Proposition \ref{prop: auslander2},
  assume $n=2$ and define the category
 $$
\begin{array}{lcl}
 \calL & \qquad : \qquad&  \mbox{finite and reflexive $R$-modules}.
\end{array}
$$
 Then,
\begin{enumerate}
\item up to isomorphism, the indecomposable reflexive $R$-modules are 
precisely the indecomposable $R$-summands of $S$.
\item The inclusion
$$
 \mathrm{add}_R(S) \,\subseteq\,\calL.
$$
is an equivalence of categories.
In particular, there exist only a finite number of nonisomorphic
indecomposable reflexive $R$-modules.
\end{enumerate}
\end{Proposition}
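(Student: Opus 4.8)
The plan is to prove the single non-formal assertion, that the inclusion $\mathrm{add}_R(S)\subseteq\calL$ is essentially surjective; everything else is then formal. Indeed, this inclusion of full subcategories of $R$-modules is automatically fully faithful, so it is an equivalence as soon as every reflexive $R$-module is isomorphic to an object of $\mathrm{add}_R(S)$; and once this holds, the indecomposable objects of $\calL$ are exactly the indecomposable $R$-summands of $S$, giving (1), while their finiteness follows from Proposition \ref{prop: auslander1}, since they correspond to the finitely many isomorphism classes of simple $G$-representations. The inputs I will use, all available at this point, are: $R$ is Cohen--Macaulay of dimension $2$, so reflexive and maximal Cohen--Macaulay (MCM) $R$-modules coincide; the algebra $\Lambda:=\End_R(S)\cong S\ast G$ is an NCCR, so $\mathrm{gl.dim}\,\Lambda=2=\dim R$ and $\Lambda$ is MCM as an $R$-module; and the projectivisation equivalence $\mathrm{add}_R(S)\simeq\mathrm{proj}(\Lambda)$ of Proposition \ref{prop: auslander3}, realised by $\Hom_R(S,-)$, with the functor $-\otimes_\Lambda S$ sending $\mathrm{proj}(\Lambda)$ into $\mathrm{add}_R(S)$.

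Given a reflexive, equivalently MCM, module $M\in\calL$, set $N:=\Hom_R(S,M)$, a $\Lambda$-module via precomposition. First I would note that $N$ is reflexive: a $\Hom$ into a reflexive module over the normal domain $R$ is reflexive, and in dimension two this is the same as being MCM over $R$. The decisive step is to upgrade this to projectivity over $\Lambda$. Since $\Lambda$ is a Cohen--Macaulay $R$-order with $\mathrm{gl.dim}\,\Lambda=\dim R<\infty$, every finitely generated $\Lambda$-module has finite projective dimension and obeys the non-commutative Auslander--Buchsbaum formula $\mathrm{proj.dim}_\Lambda N+\depth_R N=\depth_R\Lambda=\dim R$. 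As $N$ is MCM we have $\depth_R N=\dim R$, forcing $\mathrm{proj.dim}_\Lambda N=0$; thus $N$ is projective over $\Lambda$ and $N\otimes_\Lambda S\in\mathrm{add}_R(S)$.

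It remains to identify $M$ with $N\otimes_\Lambda S$, that is, to show the evaluation map $e_M\colon\Hom_R(S,M)\otimes_\Lambda S\to M$ is an isomorphism. Both sides are reflexive $R$-modules --- the source because $N$ is $\Lambda$-projective, hence $N\otimes_\Lambda S$ is a summand of some $S^{\oplus m}$ --- so it suffices to check that $e_M$ is an isomorphism over the open set $U=\Spec R\setminus\pi(Z)$ of Setup \ref{Setup}, whose complement has codimension at least two. Over $U$ the $G$-action is free, so $S_U$ is a Galois $R_U$-algebra, hence an $R_U$-progenerator with $\End_{R_U}(S_U)=\Lambda_U$; consequently $\Hom_{R_U}(S_U,-)$ and $-\otimes_{\Lambda_U}S_U$ are mutually inverse Morita equivalences and $e_M$ is an isomorphism on $U$. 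A morphism of reflexive $R$-modules that is an isomorphism away from a closed subset of codimension at least two is an isomorphism, so $M\cong N\otimes_\Lambda S\in\mathrm{add}_R(S)$.

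The main obstacle is the implication ``MCM $\Lambda$-module $\Rightarrow$ projective $\Lambda$-module'', which is exactly where $n=2$ and the NCCR property of $\Lambda$ enter, through the non-commutative Auslander--Buchsbaum formula for the Cohen--Macaulay order $\Lambda$ together with $\mathrm{gl.dim}\,\Lambda=\dim R$; the codimension-two comparison for $e_M$ is a secondary point, and is precisely where freeness in codimension one is used. As a more geometric alternative avoiding the non-commutative homological input, one can instead form the reflexive hull $N:=(S\otimes_R M)^{\vee\vee}$ over the regular local ring $S$: it is free over $S$ and $G$-equivariant, hence isomorphic to $S\otimes_k(N/\idealm N)$ by Proposition \ref{prop: auslander1}, and the same codimension-two comparison yields $M\cong N^G\in\mathrm{add}_R(S)$.
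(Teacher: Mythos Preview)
Your argument is correct. The paper itself gives no details: it simply refers to Auslander's original proof \cite[Proposition~2.1]{AuslanderRS} and asserts that the same reasoning goes through for linearly reductive group schemes. Auslander's argument is essentially the ``geometric alternative'' you sketch at the end --- form the reflexive hull $(S\otimes_R M)^{\vee\vee}$ over the regular local ring $S$, observe it is free and $G$-equivariant, hence of the form $S\otimes_k W$, and recover $M$ as the $G$-invariants --- so your alternative is the one that matches the paper's intended route.

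Your primary approach, by contrast, is genuinely different: you pass through the NCCR $\Lambda=\End_R(S)$ and use the non-commutative Auslander--Buchsbaum formula for the Cohen--Macaulay order $\Lambda$ to conclude that every MCM $\Lambda$-module is projective, then transport this back via projectivisation. This is more machinery-heavy but also more conceptual: it isolates exactly the general principle (for any NCCR in dimension~$2$, the additive closure of the generator exhausts the MCM modules) and makes transparent why $n=2$ and the crepancy of $\Lambda$ are the crucial hypotheses. Auslander's route is more elementary and self-contained, relying only on the regularity of $S$ and semisimplicity of $G$, without invoking the NCCR results of Section~\ref{sec: skew group scheme rings}; in particular it does not presuppose Theorem~\ref{thm: equivalences} or Corollary~\ref{cor: Morita}, whereas your main argument does.
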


\begin{proof}
In \cite[Proposition 2.1]{AuslanderRS}, this is shown in the case where $G$ is a finite group, whose
order is prime to the characteristic of $k$.
The same arguments also work for $G$ a finite and linearly reductive group scheme
over $k$.
\end{proof}

\section{F-blowups}
\label{sec: Fblowups}

In this section, we study two-dimensional LRQ singularities and prove that they can
be resolved by sufficiently high F-blowups.
We show this by relating F-blowups to $G$-Hilbert schemes.

\subsection{F-blowups}
We start by recalling F-blowups that were introduced by the second-named author
in \cite{YasudaUF} and which are characteristic $p$ variants of higher Nash blowups.
More precisely, let $X$ be an $n$-dimensional
variety over a perfect field $k$ of characteristic $p>0$,
let $X_{\mathrm{sm}}\subseteq X$ be the smooth locus, and let 
$F^e:X_e\to X$ be the $e$.th iterated Frobenius.
For a $K$-rational point $x\in X_{\mathrm{sm}}(K)$, the fibre $(F^e)^{-1}(x)$ is a zero-dimensional
subscheme of length $p^{en}$ of $X_e\otimes _k K$ and thus, corresponds to a
$K$-rational point of the Hilbert scheme $\mathrm{Hilb}_{p^{en}}(X_e)$.

\begin{Definition}
The \emph{$e$.th F-blowup}, denoted $\mathrm{FB}_e(X)$, is the closure of 
$$
  \left\{ (F^e)^{-1}(x) \,|\, x\in X_{\mathrm{sm}} \right\}
$$
inside $\mathrm{Hilb}_{p^{en}}(X_e)$.
\end{Definition}

By \cite[Corollary 2.6]{YasudaUF}, there exists a natural morphism
$$
   \pi _e \,:\, \mathrm{FB}_e(X)\,\to\,X,
$$
which is projective, birational, and an isomorphism over $X_{\mathrm{sm}}$.
If we set $\calM_e := (F^e)_* \calO_{X_e}$, then $(\pi_e)^* \calM_e/(\text{tors})$ is locally free. 
Moreover, $\pi_e$ is the universal proper birational morphism having this property. 
In other words, the $e$.th F-blowup of $X$ is the blowup at the module 
$\calM_e$, see \cite{OnetoZatini, Villamayor}.

\subsection{G-Hilbert schemes}
Now, we assume that we are in the situation of Setup \ref{Setup}.
If $G$ is linearly reductive, then there exists a \emph{Hilbert-Chow morphism}
$$
    \mathrm{Hilb}^G(\Spec S) \,\to\, \Spec R,
$$
as shown by the first-named author in \cite[Section 4.3]{Liedtke}, which extends
the classical results of Ito and Nakamura \cite{IN}.
Existence of the $G$-Hilbert scheme for $G$ a finite and linearly reductive group scheme 
is due to Blume \cite{Blume}.

\begin{Lemma}
  The $G$-Hilbert scheme is the blowup at the $R$-module $S$.
\end{Lemma}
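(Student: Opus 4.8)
We must show that the $G$-Hilbert scheme $\mathrm{Hilb}^G(\Spec S)$, together with its Hilbert–Chow morphism to $\Spec R$, coincides with the blowup of $\Spec R$ at the $R$-module $S$. Recall from the previous subsection that "blowup at a module $\calM$" means the universal proper birational morphism $Y\to\Spec R$ such that the pullback of $\calM$ modulo torsion becomes locally free; this is the Oneto–Zatini / Villamayor construction. So the task is to verify that $\mathrm{Hilb}^G(\Spec S)$ satisfies exactly this universal property with respect to $\calM=S$.

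**Plan.** The plan is to match the two objects through their respective universal properties. First I would recall the functor represented by $\mathrm{Hilb}^G(\Spec S)$: a $T$-point is a $G$-stable closed subscheme $Z\subseteq \Spec S\times_{\Spec k} T$, finite and flat over $T$, such that the pushforward $\calO_Z$ is, as a $G$-representation (equivalently, as an $H$-comodule with $H=H^0(G,\calO_G)^*$), isomorphic fppf-locally on $T$ to the regular representation $\rho_{\mathrm{reg}}$. This is the definition used by Blume and by the first-named author in \cite{Blume, Liedtke}. The key is that such a $Z$ carries a tautological rank-one-per-isotypic-component structure: taking $G$-invariants (or more generally isotypic pieces) of $\calO_Z$ produces, over $T$, a locally free sheaf whose formation is exactly the pullback of the module $S$ decomposed into its isotypic summands.

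**Key steps.** The crucial computation is the isotypic decomposition already established in the excerpt. By \eqref{eq: regrep} together with Propositions \ref{prop: auslander1} and \ref{prop: auslander3}, we have
$$
S \,=\, \bigoplus_i\, \left( (S\otimes_k V_i)^G \right)^{\oplus \dim_k\rho_i}
$$
as a direct sum of indecomposable reflexive $R$-modules, where the $V_i$ run over the simple $G$-representations. Over $\mathrm{Hilb}^G(\Spec S)$, the universal $G$-cluster $\calZ$ has $\calO_\calZ$ fibrewise isomorphic to $\rho_{\mathrm{reg}}$, so each isotypic component of $(\pi_{HC})_*\calO_\calZ$ is locally free of the expected rank. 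The second step is to identify $(\pi_{HC})_*\calO_\calZ$ with the pullback of $\calM=S$ to $\mathrm{Hilb}^G(\Spec S)$ modulo torsion: away from the exceptional locus both restrict to $S$ over the free locus $U$ of Setup \ref{Setup}, and on the $G$-Hilbert scheme the universal family trivialises the $G$-module structure, forcing local freeness. This exhibits $\mathrm{Hilb}^G(\Spec S)\to\Spec R$ as a proper birational morphism over which the pullback of $S$ becomes locally free (modulo torsion), i.e.\ a candidate for the blowup at $S$. The third step is the universal property: given any proper birational $g:Y\to\Spec R$ with $g^*S/(\text{tors})$ locally free, I would construct a canonical map $Y\to\mathrm{Hilb}^G(\Spec S)$ by using the locally free quotient to produce a flat family of $G$-clusters, invoking the representability of the $G$-Hilbert functor (Blume \cite{Blume}) to factor $g$ through $\pi_{HC}$, and checking the factorisation is unique. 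Conversely the blowup-at-$S$ receives a unique map from $\mathrm{Hilb}^G(\Spec S)$ by its own universal property; the two maps are mutually inverse because they agree over the dense open $U$ and both source and target are separated and reduced.

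**Main obstacle.** I expect the hard part to be the careful bookkeeping in step three, namely promoting a locally free quotient of $g^*S$ on $Y$ into an honest flat family of $G$-stable length-$p^{en}$ (equivalently, regular-representation) subschemes of $\Spec S\times Y$. In characteristic zero or for order-prime-to-$p$ groups this is standard, but here one must phrase everything in the Hopf-algebraic language of $H$-comodules and be sure that flatness of the $H$-coaction and the fibrewise-regular condition are preserved; the linear reductivity of $G$ (hence semisimplicity of $H$, via \cite{Liedtke}) is what guarantees that isotypic decomposition commutes with base change and that no pathological non-split extensions of $G$-representations intervene. Once that comodule-theoretic base-change statement is in hand, the identification of universal properties is formal, and the Lemma follows.
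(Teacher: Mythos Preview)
Your overall strategy is correct and matches the paper's: construct morphisms in both directions between $\mathrm{Hilb}^G(\Spec S)$ and the blowup $Y$ at the $R$-module $S$, then conclude they are mutual inverses because they agree over the dense free locus and both schemes are separated.

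The paper's execution, however, is much shorter and avoids the representation theory you invoke. For the map $\mathrm{Hilb}^G(\Spec S)\to Y$, the paper simply observes that the universal family $U$ is a closed subscheme of $\mathrm{Hilb}^G(\Spec S)\times_{\Spec R}\Spec S$ and is flat over the base by definition of the Hilbert scheme; this alone exhibits the Hilbert--Chow morphism as a flattening of $S$, so the universal property of $Y$ yields the map immediately --- the isotypic decomposition via \eqref{eq: regrep} and Propositions~\ref{prop: auslander1} and~\ref{prop: auslander3} is not needed. For the reverse map $Y\to\mathrm{Hilb}^G(\Spec S)$, the paper takes $(Y\times_{\Spec R}\Spec S)_{\mathrm{red}}$ as the flat family of $G$-clusters: its structure sheaf is $g^*S/(\text{tors})$, locally free by the defining property of $Y$, and the fibrewise regular-representation condition follows from flatness together with the fact that it holds over the dense open where the action is free. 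So the ``Hopf-algebraic bookkeeping'' you identify as the main obstacle is essentially absorbed into the definition of $Y$ and a density argument; no explicit comodule manipulations are required.

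In short, your route works but is the scenic one; the paper's proof fits in one paragraph by using only the two universal properties directly.
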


\begin{proof}
Let $Y\to \Spec R$ be the blowup at the $R$-module $S$. 
We will show that the birational correspondence between $Y$ and $\mathrm{Hilb}^G(\Spec S)$ 
extends to morphisms in both directions. 
If $U$ denotes the universal family over $\mathrm{Hilb}^G(\Spec S)$, then
we have inclusions
$$
U \,\subseteq\, \mathrm{Hilb}^G(\Spec S) \times _{\Spec R} \Spec S 
\,\subseteq\,  \mathrm{Hilb}^G(\Spec S) \widehat{\times} _{\Spec k} \Spec S . 
$$
Thus, $\mathrm{Hilb}^G(\Spec S)\to  \Spec R$ is a flattening of the $R$-module $S$. 
Using the universality of $Y$, we obtain a morphism $ \mathrm{Hilb}^G(\Spec S) \to Y$. 
On the other hand, the flat $Y$-scheme 
$(Y \times_{\Spec R} \Spec S)_{\mathrm{red}} \subseteq Y\widehat{\times}_{\Spec k}\Spec S$ 
induces a morphism $Y\to \mathrm{Hilb}^G(\Spec S)$. 
\end{proof}

\begin{Theorem}
\label{thm: yasuda}
 We keep the notations and assumptions of Setup \ref{Setup} and
 assume moreover that $G$ is linearly reductive.
 Then, for each $e\geq1$, we have a natural morphism
 $$
  \psi_e\,:\,\mathrm{Hilb}^G(\Spec S) \,\to\, \mathrm{FB}_e(\Spec R).
$$
Moreover, it is an isomorphism if $e$ is sufficiently large.
\end{Theorem}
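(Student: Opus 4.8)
The plan is to realise both sides as blowups at reflexive $R$-modules and then to compare those modules inside the category $\mathrm{add}_R(S)$, using the representation-theoretic dictionary of Section \ref{sec: Auslander}. By the Lemma preceding the statement, $\mathrm{Hilb}^G(\Spec S)$ is the blowup of $\Spec R$ at the $R$-module $S$; and by the description of F-blowups recalled above, $\mathrm{FB}_e(\Spec R)$ is the blowup at the module $\calM_e=(F^e)_\ast\calO_{X_e}$, which as an $R$-module is isomorphic to $R^{1/p^e}$ (via $k$ perfect). Thus the whole statement is a comparison of $\mathrm{Bl}_S$ and $\mathrm{Bl}_{R^{1/p^e}}$.

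First I would locate $R^{1/p^e}$ inside $\mathrm{add}_R(S)$. Since Frobenius is $G$-equivariant and $k$ is perfect, the $G$-action on $S$ extends to $S^{1/p^e}=k[[x_1^{1/p^e},\dots,x_n^{1/p^e}]]$, and a short computation with $p^e$-th powers in the domain $S^{1/p^e}$ gives $R^{1/p^e}=(S^{1/p^e})^G$. Now $S^{1/p^e}$ is free of rank $p^{en}$ over $S$ and carries a compatible $G$-action, i.e.\ it is an $S\ast G$-module that is $S$-projective. Because $G$ is linearly reductive, $H=H^0(G,\calO_G)^\ast$ is semisimple and admits a normalised integral; the resulting averaging operator splits the counit $S\ast G\otimes_S M\to M$, so any $S\ast G$-module $M$ which is $S$-free is a direct summand of $(S\ast G)^{\oplus r}$ and hence $S\ast G$-projective. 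Therefore $S^{1/p^e}\in\calP$, and Proposition \ref{prop: auslander3} yields $R^{1/p^e}=(S^{1/p^e})^G\in\mathrm{add}_R(S)$; concretely $S^{1/p^e}\cong S\otimes_k W_e$ with $W_e=S^{1/p^e}/\idealm S^{1/p^e}$, so $R^{1/p^e}=(S\otimes_k W_e)^G$.

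This membership immediately produces $\psi_e$. On $\mathrm{Hilb}^G(\Spec S)=\mathrm{Bl}_S$ the sheaf $(\pi^\ast S)/(\mathrm{tors})$ is locally free by construction. Writing $S^{\oplus r}\cong R^{1/p^e}\oplus C$, the idempotent cutting out $R^{1/p^e}$ pulls back to an idempotent endomorphism of the locally free sheaf $(\pi^\ast S^{\oplus r})/(\mathrm{tors})$, whose image is a locally free summand isomorphic to $(\pi^\ast\calM_e)/(\mathrm{tors})$. By the universal property of the F-blowup as the blowup at $\calM_e$, the proper birational morphism $\mathrm{Bl}_S\to\Spec R$ factors uniquely as $\psi_e:\mathrm{Hilb}^G(\Spec S)\to\mathrm{FB}_e(\Spec R)$ over $\Spec R$, and $\psi_e$ is birational and an isomorphism over the smooth locus.

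It remains to prove that $\psi_e$ is an isomorphism for $e\gg0$, and this is the main obstacle. The formal part is symmetric: as soon as one knows $S\in\mathrm{add}_R(R^{1/p^e})$, the universal property of $\mathrm{Bl}_S$ produces a morphism $\mathrm{FB}_e(\Spec R)\to\mathrm{Hilb}^G(\Spec S)$, and the two composites are birational $\Spec R$-morphisms that are the identity on the smooth locus, hence the identity by the uniqueness in the universal properties. So everything reduces to the equality of additive categories $\mathrm{add}_R(R^{1/p^e})=\mathrm{add}_R(S)$ for large $e$. Translating through the equivalence $\mathrm{Rep}_k(G)\simeq\mathrm{add}_R(S)$ of Proposition \ref{prop: auslander1}, under which $S$ corresponds to the regular representation and hence $\mathrm{add}_R(S)$ to all of $\mathrm{Rep}_k(G)$, this equality becomes the statement that every simple representation $V_i$ occurs in $W_e=S^{1/p^e}/\idealm S^{1/p^e}$ for $e\gg0$. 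This is exactly where the strong F-regularity of $R$, equivalently the linear reductivity of $G$ by Theorem \ref{thm: equivalences}, is genuinely used: it is the stabilisation of the modules $R^{1/p^e}$ that underlies the Morita equivalence of Corollary \ref{cor: Morita}(2) (see \cite[Corollary 4.2]{Yasuda}), generalising \cite{TY,YasudaUF}. I would therefore invoke that stabilisation to conclude $\mathrm{add}_R(R^{1/p^e})=\mathrm{add}_R(S)$ for large $e$, whence $\psi_e$ is an isomorphism.
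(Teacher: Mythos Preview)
Your overall strategy matches the paper's: both realise $\mathrm{Hilb}^G(\Spec S)$ and $\mathrm{FB}_e(\Spec R)$ as blowups at the $R$-modules $S$ and $R^{1/p^e}$ respectively, obtain $\psi_e$ from the inclusion $\mathrm{add}_R(R^{1/p^e})\subseteq\mathrm{add}_R(S)$, and deduce the isomorphism for large $e$ from the stabilisation $\mathrm{add}_R(R^{1/p^e})=\mathrm{add}_R(S)$ of \cite[Proposition~4.1]{Yasuda}. The paper packages the first step abstractly (the blowup at $M_1\oplus M_2$ is the dominant component of the fibre product of the blowups at $M_1$ and $M_2$, so the blowup depends only on the set of indecomposable summands), whereas you unwind the universal property by hand; these are the same argument.

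There is, however, a genuine gap in your second paragraph. The claim that ``Frobenius is $G$-equivariant'' and hence that the $G$-action on $S$ extends to $S^{1/p^e}$ is false for non-\'etale linearly reductive $G$. Take $G=\bmu_p$ acting on $S=k[[x]]$ with weight one, so the coaction sends $x\mapsto x\otimes t$ in $S\otimes k[t]/(t^p-1)$. Any comodule-algebra structure on $S^{1/p}=k[[x^{1/p}]]$ restricting to this one must send $x^{1/p}$ to an element whose $p$-th power is $x\otimes t$; but in characteristic $p$ every $p$-th power in $S^{1/p}\otimes k[\bmu_p]$ lies in $S^{1/p}\otimes 1$, since $t^p=1$. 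Thus no such extension exists, $S^{1/p^e}$ does not carry a compatible $S\ast G$-module structure, and your route to $R^{1/p^e}\in\mathrm{add}_R(S)$ via Proposition~\ref{prop: auslander3} breaks down.

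The repair is immediate and bypasses $S\ast G$ altogether: since $G$ is linearly reductive, $R\hookrightarrow S$ splits as $R$-modules (Theorem~\ref{thm: equivalences}), so $R^{1/p^e}\hookrightarrow S^{1/p^e}$ splits as well; and $S^{1/p^e}$ is free of finite rank over $S$ because $S$ is regular. Hence $R^{1/p^e}$ is a summand of a finite direct sum of copies of $S$, i.e.\ $R^{1/p^e}\in\mathrm{add}_R(S)$. With this correction the rest of your argument goes through and coincides with the paper's.
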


\begin{proof}
If $Y_1$ and $Y_2$ are the blowups at $R$-modules $M_1$ and $M_2$ respectively, 
then the blowup at $M_1\oplus M_2$
is the unique irreducible component of  $Y_1\times_{\Spec R}Y_2$ that surjects onto $\Spec R$. 
This shows that the blowup at a module $M$ depends only on the set of 
isomorphism classes of indecomposable modules that appear as direct summands of $M$. 
It follows also that if every indecomposable summand of $M$ also appears as a direct summand of $M'$, 
then there exists a natural morphism from the blowup at $M'$ to the blowup at $M$.
This gives the existence of the natural morphism $\psi_e$, as desired.

For $e\gg 0$, the indecomposable $R$-modules that appear as summands of $R^{1/p^e}$ 
are the same as direct summands of $S$ \cite[Proposition 4.1]{Yasuda}, which shows that  $\psi_{e}$ is an isomorphism.
\end{proof}

\begin{Remark}
 The fact that $\psi_e$ is an isomorphism for sufficiently large $e$ 
 can be viewed as a commutative version of the Morita equivalence between the two 
 NCCRs $\End_R(S)=S\ast G$ and $\End_R(R^{1/p^e})$, which we 
 established in Corollary \ref{cor: Morita}.
 We refer the interested reader to \cite{TY} for a discussion
 in the case where $G$ is a finite group of order prime to $p$.
\end{Remark}

If $n=2$, then $\mathrm{Hilb}^G(\Spec S)\to\Spec R$ is the minimal resolution of singularities, which is 
due to Ishii, Ito, and Nakamura \cite{Ishii, IN, IshiiN} if $G$ is a finite group of order prime to $p$
and which is due to the first-named author \cite[Theorem 4.5]{Liedtke} if $G$ is a finite and linearly reductive group 
scheme.
Together with the previous results, we conclude the following.

\begin{Corollary}
\label{cor: minres}
Under the assumptions of Theorem \ref{thm: yasuda} 
 assume moreover $n=2$.
 If $e$ is sufficiently large, then
$$
  \mathrm{Hilb}^G(\Spec S) \,\cong\, \mathrm{FB}_e(\Spec R)
  \,\to\,\Spec R
$$
 is the minimal resolution of singularities of $\Spec R$.
\end{Corollary}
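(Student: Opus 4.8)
The plan is to assemble the statement from two ingredients that are already in place. First I would invoke Theorem~\ref{thm: yasuda}: for $e$ sufficiently large the natural morphism $\psi_e:\mathrm{Hilb}^G(\Spec S)\to\mathrm{FB}_e(\Spec R)$ is an isomorphism. Since both sides are blowups over $\Spec R$ and $\psi_e$ is, by its construction in the proof of Theorem~\ref{thm: yasuda}, a morphism of $\Spec R$-schemes, the triangle formed by $\psi_e$, the F-blowup structure morphism $\pi_e:\mathrm{FB}_e(\Spec R)\to\Spec R$, and the Hilbert--Chow morphism $\mathrm{Hilb}^G(\Spec S)\to\Spec R$ commutes. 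Hence, identifying the two spaces via $\psi_e$, the morphism $\mathrm{FB}_e(\Spec R)\to\Spec R$ is identified with the Hilbert--Chow morphism.

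Second, I would use the fact recalled immediately before the statement---due to the first-named author \cite[Theorem 4.5]{Liedtke}, extending Ishii, Ito, and Nakamura---that in the case $n=2$ the Hilbert--Chow morphism $\mathrm{Hilb}^G(\Spec S)\to\Spec R$ is the minimal resolution of singularities of $\Spec R$. Combining this with the commuting triangle above yields the chain
$$
  \mathrm{Hilb}^G(\Spec S)\,\cong\,\mathrm{FB}_e(\Spec R)\,\to\,\Spec R
$$
together with the assertion that it is the minimal resolution, which is exactly the claim.

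The only point requiring any care is the compatibility of the three morphisms, that is, that $\psi_e$ intertwines $\pi_e$ with the Hilbert--Chow morphism. I expect this to be automatic rather than a genuine obstacle: all the spaces in sight are blowups of $\Spec R$ at reflexive $R$-modules, and $\psi_e$ is produced as a morphism over $\Spec R$, so the triangle commutes by construction. Thus the entire content has been pushed into Theorem~\ref{thm: yasuda} (the isomorphism $\psi_e$ for large $e$) and into \cite[Theorem 4.5]{Liedtke} (minimality of the $G$-Hilbert scheme in dimension two), and the corollary is their formal combination.
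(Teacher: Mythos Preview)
Your proposal is correct and matches the paper's approach exactly: the corollary is stated without a separate proof, being deduced directly from Theorem~\ref{thm: yasuda} together with the fact, recalled just before the statement, that in dimension two the Hilbert--Chow morphism is the minimal resolution by \cite[Theorem 4.5]{Liedtke}. Your remark on the compatibility of the morphisms is a reasonable extra observation, but as you say it is automatic since $\psi_e$ is constructed over $\Spec R$.
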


\begin{Corollary}
  \label{cor: crep dim 3}
  Under the assumptions of Theorem \ref{thm: yasuda} 
   assume moreover that $n=3$ and $R$ is Gorenstein.
   If $e$ is sufficiently large, then
  $$
    \mathrm{Hilb}^G(\Spec S) \,\cong\, \mathrm{FB}_e(\Spec R)
    \,\to\,\Spec R
  $$
   is a crepant resolution of singularities of $\Spec R$.
  \end{Corollary}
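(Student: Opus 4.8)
The plan is to show that $\pi:Y:=\mathrm{Hilb}^G(\Spec S)\to X:=\Spec R$ is a crepant resolution; by Theorem \ref{thm: yasuda} this settles the claim for $\mathrm{FB}_e(\Spec R)$ as well once $e\gg0$, since $\psi_e$ is then an isomorphism over $X$ and $\pi$ is already known to be proper, birational, and an isomorphism over the smooth locus. Thus two things remain: that $Y$ is \emph{smooth} and that $\pi$ is \emph{crepant}, i.e. $\omega_Y\cong\pi^*\omega_X$. Following Bridgeland--King--Reid and van den Bergh, the strategy is to deduce both from the non-commutative crepant resolution $\End_R(S)\cong S\ast G$ of Corollary \ref{cor: Morita}(1) by passing through a derived equivalence. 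First I would extract the input carried by the Gorenstein hypothesis: as in the classical Watanabe criterion, that $R=S^G$ is Gorenstein should translate into $G$ acting on $\omega_S=\wedge^n\Omega_{S/k}$ through the trivial character, i.e. the action factoring through $\SL_n$ in the group-scheme sense. I would prove this character computation directly; its purpose is to make the equivariant category $D^{\mathrm b}_G(\Spec S)\simeq D^{\mathrm b}(\mathrm{mod}\,S\ast G)$ Calabi--Yau, in the sense that its Serre functor is the shift $[n]$.

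Next I would construct the Fourier--Mukai functor. Let $\mathcal Z\subseteq Y\times_X\Spec S$ be the universal $G$-cluster, with projections $p:\mathcal Z\to Y$ and $q:\mathcal Z\to\Spec S$, and set $\mathcal T:=p_*\calO_{\mathcal Z}$. By the equivalences of Propositions \ref{prop: auslander1} and \ref{prop: auslander3}, $\mathcal T$ decomposes along the simple $G$-representations and satisfies $\End_Y(\mathcal T)\cong\End_R(S)\cong S\ast G$. The functor $\Phi:=Rp_*\,q^*(-):D^{\mathrm b}_G(\Spec S)\to D^{\mathrm b}(\mathrm{coh}\,Y)$ is the candidate equivalence. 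To prove $\Phi$ is an equivalence I would run the Bridgeland criterion: bound the dimension of the fibre product $Y\times_X Y$ (equivalently, the locus where two $G$-clusters meet) by $\dim Y=3$, and conclude full faithfulness, hence essential surjectivity. Equivalently, and perhaps more robustly in characteristic $p$, I would show directly that $\mathcal T$ is a \emph{tilting bundle}, i.e. $\Ext^{>0}_Y(\mathcal T,\mathcal T)=0$ and $\mathcal T$ generates $D^{\mathrm b}(\mathrm{coh}\,Y)$, so that $R\Hom_Y(\mathcal T,-)$ is an equivalence $D^{\mathrm b}(\mathrm{coh}\,Y)\xrightarrow{\sim}D^{\mathrm b}(\mathrm{mod}\,S\ast G)$.

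With the equivalence in hand, smoothness and crepancy follow formally. Since $\mathrm{gl.dim}(S\ast G)=n=3$ by Theorem \ref{thm: equivalences}, every object of $D^{\mathrm b}(\mathrm{mod}\,S\ast G)$ is perfect; transporting this through the equivalence gives $D^{\mathrm b}(\mathrm{coh}\,Y)=\mathrm{Perf}(Y)$, so $Y$ is regular and hence, as $k$ is perfect, smooth -- thus $\pi$ is a genuine resolution of singularities. For crepancy, the Calabi--Yau property from the Gorenstein step makes the Serre functor on $D^{\mathrm b}_G(\Spec S)$ the shift $[3]$; the equivalence transports it to the relative Serre functor $-\otimes\omega_{Y/k}[3]$ on $D^{\mathrm b}(\mathrm{coh}\,Y)$, forcing $\omega_Y\cong\pi^*\omega_X$. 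Combined with $R^{>0}\pi_*\calO_Y=0$, which follows from the maximal Cohen--Macaulay property of the NCCR $\End_R(S)$, this is precisely the assertion that $\pi$ is crepant.

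The main obstacle is the derived equivalence in positive characteristic for a finite, possibly non-reduced, linearly reductive group scheme $G$. The Bridgeland--King--Reid fibre-dimension estimate relies on characteristic-zero geometry, so the real work is to establish the tilting/equivalence statement intrinsically -- controlling $\Ext^{>0}_Y(\mathcal T,\mathcal T)$ and generation -- for the equivariant category $D^{\mathrm b}_G(\Spec S)$ of an infinitesimal group scheme, where Frobenius and non-reducedness obstruct the usual arguments. If instead a general positive-characteristic theorem of the form ``an NCCR together with a projective birational model derived-equivalent to it via a tilting bundle yields a crepant resolution'' is available, the crux reduces to verifying the tilting hypothesis for $\mathcal T$, which remains the essential difficulty.
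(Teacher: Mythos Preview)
Your outline is a correct roadmap of the BKR/van den Bergh machinery, but you are rebuilding a theorem the paper simply cites. The paper's proof is one sentence: apply \cite[Theorem~6.3.1]{vdBergh} exactly as in \cite[Corollary~3.6]{TY}. Van den Bergh's theorem takes as input a $3$-dimensional Gorenstein normal ring $R$ together with an NCCR $A=\End_R(M)$, and concludes that a projective fine moduli space of stable $A$-modules (for a generic stability condition and the natural dimension vector) is a smooth crepant resolution of $\Spec R$, with the universal family a tilting bundle. The $G$-Hilbert scheme is precisely such a moduli space for $A=S\ast G$ --- a $G$-cluster is a cyclic $S\ast G$-module whose underlying $G$-representation is the regular one --- and $S\ast G\cong\End_R(S)$ is an NCCR by Corollary~\ref{cor: Morita}. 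So the hypotheses are met and both smoothness and crepancy follow at once.

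What you flag as the ``essential difficulty'' --- establishing the tilting property, or a fibre-dimension bound, in positive characteristic for a possibly non-reduced linearly reductive $G$ --- is exactly the content packaged inside van den Bergh's theorem, whose proof is homological (Serre duality, spanning classes, boundedness coming from finite global dimension of the NCCR) rather than characteristic-zero geometric. The citation of \cite{TY} records that the argument already runs in characteristic $p$ for tame finite groups; nothing in it distinguishes a tame group from a linearly reductive group scheme once one has the NCCR and the moduli interpretation of $\mathrm{Hilb}^G$. So your plan is not wrong, but it reconstructs machinery available off the shelf, and the acknowledged gap in your argument is closed by citing it.
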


\begin{proof}
  This follows from Thereom 6.3.1 of \cite{vdBergh}, 
  in the same way as Corollary 3.6 of \cite{TY} does.
\end{proof}

We end this section by the following lemma, which gives an easy
criterion for when an LRQ singularity is Gorenstein.
In characteristic zero or if $G$ is a finite group of order prime to $p$, 
then this is due to Watanabe \cite{Watanabe}.
The extension to the linear reductive case should be known to 
the experts, but we could not find a proper reference.
See also the survey in \cite[Section 3.9.5]{Derksen}.

\begin{Proposition}
 In Setup \ref{Setup} assume moreover that $G$ is linearly reductive.
 After a change of coordinates, we may assume that the $G$-action is linear, that is,
 that $G$ is a subgroup scheme of 
 $\GL_{n,k}$ and that the $G$-action on
 $S=k[[x_1,...,x_n]]$ is compatible with the embedding of $G$ into $\GL_{n,k}$ and the usual 
 $\GL_{n,k}$-action on $S$.
The following are equivalent:
 \begin{enumerate}
  \item $G$ is a subgroup scheme of $\SL_{n,k}$.
 \item $R=S^G$ is Gorenstein.
 \end{enumerate}
\end{Proposition}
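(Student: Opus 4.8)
The plan is to compute the canonical module $\omega_R$ explicitly and to show that $R$ is Gorenstein exactly when the determinant character of $\rho\colon G\hookrightarrow\GL_{n,k}$ is trivial. By Proposition~\ref{prop: endoring} the ring $R$ is normal, and by Theorem~\ref{thm: equivalences} it is Cohen--Macaulay; hence $R$ admits a canonical module $\omega_R$, which is a reflexive $R$-module of rank one, and $R$ is Gorenstein if and only if $\omega_R\cong R$. Since $S=k[[x_1,\dots,x_n]]$ is regular, $\omega_S\cong S$ is free, generated by the top form $\eta:=dx_1\wedge\cdots\wedge dx_n$, and because the $G$-action is linear, $\eta$ is a semi-invariant whose weight is the character $\chi:=\det\rho\colon G\to\GG_m$ (up to inversion, which is immaterial here). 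Thus $\chi$ is trivial precisely when $G\subseteq\SL_{n,k}$, and the whole statement reduces to deciding when the reflexive module $(\omega_S)^G$ is free.

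The key step is the Watanabe-type isomorphism $\omega_R\cong(\omega_S)^G$. First I would observe that $(\omega_S)^G$ is reflexive of rank one: it is a direct summand (via the Reynolds operator, $G$ being linearly reductive) of the reflexive $R$-module $\omega_S\cong S$. As both $\omega_R$ and $(\omega_S)^G$ are reflexive over the normal ring $R$, it suffices to identify them on the free locus $U=\Spec R\setminus\pi(Z)$, whose complement has codimension at least two. Over $U$ the quotient map $\pi\colon V\to U$ is a $G$-torsor, and the point is that its relative dualizing sheaf $\omega_{V/U}$ is $G$-equivariantly trivial. This holds because a finite group scheme over a field is Gorenstein---its coordinate ring $H^0(G,\calO_G)$ is a Frobenius algebra, so $\omega_{G/k}$ is an invertible $\calO_G$-module---and because $H^0(G,\calO_G)$ is a commutative Hopf algebra and hence unimodular, so that the generating integral is bi-translation-invariant and descends. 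Consequently Grothendieck duality for the finite flat morphism $\pi$ gives $\omega_V\cong\pi^\ast\omega_U\otimes\omega_{V/U}\cong\pi^\ast\omega_U$ equivariantly, and taking $G$-invariant pushforward (descent) yields $\omega_U\cong(\pi_\ast\omega_V)^G=\big((\omega_S)^G\big)|_U$. Extending this codimension-one isomorphism by reflexivity gives $\omega_R\cong(\omega_S)^G$.

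It then remains to identify $(\omega_S)^G$ and to analyse its freeness. Writing $\omega_S=S\eta$ with $\eta$ of weight $\chi$, an element $s\eta$ is $G$-invariant if and only if $s$ is a semi-invariant of weight $\chi^{-1}$; hence $(\omega_S)^G\cong S_{\chi^{-1}}$, the module of $\chi^{-1}$-semi-invariants. If $G\subseteq\SL_{n,k}$, then $\chi$ is trivial, so $S_{\chi^{-1}}=S^G=R$ and $\omega_R\cong R$, proving that $R$ is Gorenstein. Conversely, suppose $R$ is Gorenstein, so $\omega_R\cong S_{\chi^{-1}}$ is free, say $S_{\chi^{-1}}=R\,h_0$ for a semi-invariant $h_0$. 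Because the $G$-action is free in codimension one, the reflexive product $S_\chi\cdot S_{\chi^{-1}}$ equals $R$, whence $S_\chi=R\,h_0^{-1}$ inside $\Frac(S)$; as $S_\chi\subseteq S$ this forces $h_0^{-1}\in S$, so $h_0\in S^\times$ is a unit. Reducing the identity $g\cdot h_0=\chi^{-1}(g)\,h_0$ modulo $\idealm_S$, where $G$ acts trivially on $S/\idealm_S=k$ (the action being linear) while $\bar h_0\in k^\times$, we conclude $\chi^{-1}=1$, that is, $\chi$ is trivial and $G\subseteq\SL_{n,k}$.

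The main obstacle is the equivariant triviality of the relative dualizing sheaf $\omega_{V/U}$ of the torsor, and more generally making the duality isomorphism $\omega_R\cong(\omega_S)^G$ rigorous in positive characteristic, where $G$ may be infinitesimal and $\pi$ purely inseparable; here the classical reduction ``free action $\Rightarrow$ \'etale cover $\Rightarrow$ unramified'' is unavailable, and one must instead argue through the Gorenstein and unimodular properties of the finite group scheme. By contrast, linear reductivity is used only to guarantee that $R$ is Cohen--Macaulay (so that ``Gorenstein'' is meaningful and $\omega_R$ is the honest canonical module) and that $(\omega_S)^G$ is a reflexive direct summand of $\omega_S$.
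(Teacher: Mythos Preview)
Your argument is correct, but it proceeds quite differently from the paper's. For $(1)\Rightarrow(2)$ the paper simply cites Hashimoto's result \cite[Corollary 32.5]{LipmanHashimoto}, while for $(2)\Rightarrow(1)$ it argues by lifting to characteristic zero: linear reductivity gives a lift of $(G,S)$ over $W(k)$, Gorensteinness ascends along the flat deformation and descends to the geometric generic fibre, and there the classical Watanabe criterion (for finite groups in characteristic zero) forces $G_{\mathrm{abs}}\subset\SL_n$, whence $G\subset\SL_{n,k}$. Your approach instead computes $\omega_R$ directly and uniformly in all characteristics, via the Watanabe-type identification $\omega_R\cong(\omega_S)^G\cong S_{\chi^{-1}}$, and then analyses when this rank-one reflexive module is free. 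What your route buys is self-containment and a characteristic-free argument that makes the role of the determinant character completely explicit; what the paper's route buys is brevity, by outsourcing $(1)\Rightarrow(2)$ and reducing $(2)\Rightarrow(1)$ to the well-known characteristic-zero case.

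Two small remarks on execution. First, your derivation of $\omega_R\cong(\omega_S)^G$ via the $G$-equivariant triviality of $\omega_{V/U}$ (using that $H^0(G,\calO_G)$ is Frobenius and, being commutative, unimodular) is correct but slightly roundabout: one can also go directly through Grothendieck duality for the finite map $\Spec S\to\Spec R$, namely $\omega_S\cong\Hom_R(S,\omega_R)$, and then use linear reductivity to identify $G$-invariant homomorphisms with $\Hom_R(S^G,\omega_R)=\omega_R$. Second, in your step ``$S_\chi\cdot S_{\chi^{-1}}=R$ whence $S_\chi=R\,h_0^{-1}$'', the cleanest way to justify that $h_0$ is a unit is: since $[S_{\chi^{-1}}]=0$ in $\mathrm{Cl}(R)$ also $[S_\chi]=0$, say $S_\chi=R\,g_0$; then $g_0h_0\in R$ generates the ideal $S_\chi\cdot S_{\chi^{-1}}$, whose reflexive hull is $R$ because the multiplication $(S_\chi)|_U\otimes(S_{\chi^{-1}})|_U\to\calO_U$ is an isomorphism on the torsor locus $U$; as $R$ is a Krull domain, a principal ideal with reflexive hull $R$ must be the unit ideal, so $g_0h_0\in R^\times$ and $h_0\in S^\times$. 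Your reduction modulo $\idealm_S$ then goes through verbatim (and works for infinitesimal $G$ as well, phrased in terms of the comodule structure).
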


\begin{proof}
The assertion on linearisation follows from \cite[proof of Corollary 1.8]{Satriano}, see also
the discussion in \cite[Section 6.2]{LMM}.

$(1)\Rightarrow (2)$:
This is due to Hashimoto
\cite[Corollary 32.5]{LipmanHashimoto}.

$(2)\Rightarrow (1)$:
Since $G$ is linearly reductive, there exists a lift of $G$ and the linear 
$G$-action on $S=k[[x_1,...,x_n]]$ to $W(k)[[x_1,...,x_n]]$, see the discussion in \cite[Section 4.4]{Liedtke}.
From this, we obtain the canonical lift $\mathcal{X}_{\mathrm{can}}\to \Spec W(k)$ of the
LRQ singularity $X=\Spec S^G$ over the ring $W(k)$ of Witt vectors.
We let $K$ be the field of fractions of $W(k)$ and let $\overline{K}$ be an algebraic closure of $K$.

Seeking a contradiction, suppose that $G$ (which is naturally a subgroup scheme of $\GL_{n,k}$) 
is not a subgroup scheme of $\SL_{n,k}$ and that $X$ is Gorenstein.
There exists a finite field extension $L\supseteq K$, such that the generic fibre of the lift of $G$ over $L$
is a constant group scheme associated to some finite group $G_{\mathrm{abs}}$.
Since $G$ is not a subgroup scheme of $\SL_{n,k}$, we have that $G_{\mathrm{abs}}$ is
not a subgroup of $\SL_{n,\overline{K}}$ and thus,
a fortiori, not of $\SL_{n,L}$, see also the discussion in \cite[Section 4.4]{Liedtke}.
Let $\mathcal{X}_L:=\mathcal{X}\otimes_{\Spec W(k)}\Spec L$ be the generic fibre of $\mathcal{X}$
base-changed to $L$.
Since $\mathcal{X}_L$ is isomorphic to
$\Spec L[[x_1,...,x_n]]^{G_{\mathrm{abs}}}$, it is not Gorenstein by the characteristic zero 
results already mentioned above.
On the other hand, $\mathcal{X}$ is Gorenstein since $X$ is \cite[Theorem 23.4]{Matsumura},
which implies that the geometric generic fibre of $\mathcal{X}_L$ over $L$ is also Gorenstein
\cite[Theorem 18.2 and Theorem 23.6]{Matsumura}.
This is a contradiction.
\end{proof}

\section{Examples}
\label{sec: applications}

In this section, we apply the results of the previous sections to a couple of classes of singularities,
such as $\mathbb{Q}$-factorial toric singularities, F-regular surface singularities,
and canonical surface singularities.

By a \emph{singularity} $X$, we mean in this section the spectrum $X=\Spec R$ where $R$ is a local and complete
$k$-algebra with $k$ an algebraically closed field.

\subsection{Toric singularities}
Let us say that a normal singularity $X$ is \emph{toric} if arises as the
completion of a normal toric variety 
at a closed point.
The following result should be well-known to the experts, see \cite[Theorem 11.4.8]{CoxLittleSchenck} 
in characteristic zero.

\begin{Proposition}
Let $X$ be a normal $n$-dimensional singularity over an algebraically closed field $k$.
Set $S:=k[[x_1,...,x_n]]$ and let $\TT^n:=(\GG_{m,k})^n$ be the $n$-dimensional torus together 
with its usual $k$-linear action on $S$.
Then, the following are equivalent:
\begin{enumerate}
\item $X$ is toric and $\QQ$-factorial.
\item $X$ is isomorphic to $\Spec R$ with $R\cong S^G$,
where $G$ is a finite subgroup scheme of $\TT^n$, and where $G$ acts via the $\TT^n$-action on $S$ with an action that is 
free in codimension one.
\end{enumerate}
\end{Proposition}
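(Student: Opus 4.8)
The plan is to translate both conditions into the combinatorial dictionary of toric geometry, where each side will correspond to a full-dimensional \emph{simplicial} cone. Write $M\cong\ZZ^n$ for the character lattice of $\TT^n$, so that $\TT^n=\Spec k[M]$ and $x_i$ has weight $e_i$ for the standard basis $e_1,\dots,e_n$ of $M$; let $N=\Hom(M,\ZZ)$ be the dual lattice. The essential input is that, over the algebraically closed field $k$, every finite subgroup scheme $G\subseteq\TT^n$ is diagonalizable, i.e.\ $G=\Spec k[M/L]$ for a finite-index sublattice $L\subseteq M$, so that $G$ is automatically linearly reductive. In particular $(-)^G$ is exact and commutes with $\idealm$-adic completion, and the $G$-weight of the monomial $x^a$ (for $a\in\NN^n$) is the class of $a$ in $M/L$. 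Both implications then rest on the fact that the first orthant is simplicial and self-dual.

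For $(2)\Rightarrow(1)$ I would compute $R=S^G$ directly on monomials: $x^a$ is $G$-invariant precisely when $a\in\NN^n$ lies in $L$. Hence $R$ is the completion, at the irrelevant ideal, of the semigroup ring $k[\NN^n\cap L]$. Since $\NN^n\cap L=\RR_{\geq0}^n\cap L$, this affine semigroup is saturated, so $k[\NN^n\cap L]$ is a normal affine toric variety whose cone (in the dual lattice) is the first orthant; being simplicial, this cone makes the variety, and therefore its completion $X$, $\QQ$-factorial. Thus $X$ is the completion of a normal toric variety at its torus-fixed point and is $\QQ$-factorial. Note that freeness in codimension one is not needed here: it is part of Setup~\ref{Setup} and only serves to place us in the paper's framework.

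For $(1)\Rightarrow(2)$ I would first reduce to the torus-fixed point of a full-dimensional cone. Writing $X=\widehat{\calO}_{U,x}$ for a normal toric variety $U$ with fan in some $N_\RR$ and a closed point $x\in O_\tau$, the local structure of toric varieties gives an equivariant splitting $\widehat{\calO}_{U,x}\cong k[[t_1,\dots,t_r]]\,\widehat{\otimes}_k\,\widehat{\calO}_{U_{\bar\sigma},x_0}$, where $r=\dim O_\tau$, the cone $\bar\sigma$ is the image in $N(\tau)_\RR:=N_\RR/\RR\tau$ of a cone of the fan having $\tau$ as a face, and $x_0$ is its fixed point. Here $\QQ$-factoriality of $X$ is equivalent to $\bar\sigma$ being simplicial. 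For a full-dimensional simplicial cone I would take its primitive ray generators $u_1,\dots,u_m\in N(\tau)$ and set $N':=\sum_i\ZZ u_i$, a finite-index sublattice of which the $u_i$ form a basis; then $U_{\bar\sigma,N'}\cong\AA^m$, and the inclusion $N'\subseteq N(\tau)$ realises $U_{\bar\sigma}=\AA^m/G_0$ for the finite diagonalizable subgroup scheme $G_0\subseteq\TT^m$ with character group $(N')^\vee/N(\tau)^\vee$. Completing at the fixed point and folding the smooth factor $k[[t_1,\dots,t_r]]$ into $r$ extra coordinates on which $G$ acts trivially exhibits $X\cong\Spec S^G$ with $G=G_0\subseteq\TT^m\subseteq\TT^n$ acting through the torus, where $m+r=n$. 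The action is free in codimension one because, along each ray $\RR_{\geq0}u_i$, the $N'$-primitive and $N(\tau)$-primitive generators both equal $u_i$, so the ramification index of $\AA^m\to U_{\bar\sigma}$ along the corresponding toric divisor is $1$; equivalently, the finite flat quotient map is étale in codimension one.

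The main obstacle is twofold. First, one must carry out the local-structure reduction at a \emph{non}-fixed closed point with care, identifying the transverse slice $U_{\bar\sigma}$ and confirming that $\QQ$-factoriality of $X$ matches simpliciality of $\bar\sigma$ (consistent, since faces and quotients of simplicial cones are simplicial). Second, and more specific to positive characteristic, the index $[N(\tau):N']$ may be divisible by $p$, so $G_0$ need not be reduced: it is a genuine group scheme rather than a finite group, which is exactly why the statement must be phrased with subgroup schemes. Consequently the freeness criterion has to be checked scheme-theoretically — the non-free locus is where $\AA^m\to\AA^m/G_0$ fails to be étale, and the toric ramification computation remains valid for non-reduced $G_0$, as already the example of $\bmu_p$ scaling $\AA^1$ (free exactly on $\GG_m$, hence \emph{not} in codimension one) illustrates. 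Granting these two points, both implications follow from the simpliciality of the first orthant.
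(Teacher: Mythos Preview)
Your argument is correct and proceeds by a genuinely different route from the paper. For $(1)\Rightarrow(2)$ the paper invokes the Cox construction of \cite{GS}, which realises the affine toric variety as $\Aff^n_k/G$ with $G\cong\mathrm{Cl}(X)^D$; $\QQ$-factoriality then forces $\mathrm{Cl}(X)$ to be finite, and freeness in codimension one is obtained from Satriano's linearly reductive Chevalley--Shephard--Todd theorem \cite{Satriano}. You instead work directly with the lattice picture: choose primitive ray generators of the simplicial cone, form the sublattice $N'$, and read off $G_0$ and the ramification indices by hand. For $(2)\Rightarrow(1)$ the paper cites \cite{GS} for toricity and then gives a characteristic-$p$ Frobenius argument for $\QQ$-factoriality, sandwiching $R$ between $(R')^{p^e}$ and $R':=S^{G_{\mathrm{red}}}$ and pulling Cartier-ness through the $p^e$-th power map. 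You instead compute $S^G$ on monomials as the completion of $k[\NN^n\cap L]$ and observe that its cone is the (simplicial) first orthant. Your approach is more elementary and characteristic-free, and makes the group $G$ completely explicit; the paper's approach is shorter because it outsources the work to \cite{GS} and \cite{Satriano}, and its Frobenius trick for $\QQ$-factoriality neatly sidesteps any comparison of class groups under completion, which in your argument is left implicit when passing from the simplicial cone to its completed local ring.
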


\begin{proof}
We follow the proof of \cite[Proposition 7.3]{LMM} and generalise it to our situation:

$(1)\Rightarrow(2):$
If $X$ is toric, then it is analytically isomorphic to $\Spec k[M]$ for some affine semi-group $M$. 
Clearly, we may assume that $X$ is not smooth
and then, $\Spec k[M]$ has no torus factors.
In this situation, the Cox construction (see, for example \cite[Section 3.1]{GS}) realises $\Spec k[M]$ 
as a quotient $\Aff^n_k/G$,
where $G\cong\mathrm{Cl}(k[M])^D\cong\mathrm{Cl}(X)^D$ and where the $G$-action is linear and diagonal
(see, for example, \cite[Proposition 5.8 and Corollary 5.9]{GS}).
Since $X$ is $\QQ$-factorial, $\mathrm{Cl}(X)$ is finite and thus, $G$ is a finite group scheme.
Moreover, $G$ is a subgroup scheme of $\TT^n$.
By the linearly reductive version of the Chevalley-Todd theorem \cite{Satriano}, the $G$-action is
small, that is, free in codimension one.

$(2)\Rightarrow(1):$
We have that $X$ is toric by \cite[Theorem 5.2]{GS}.
Let $G'$ be the associated reduced subscheme of $G$, which is a finite group and let $R':=S^{G'}$. 
Then, as is well-known, $R'$ is $\QQ$-factorial. 
For $e\gg 0$, we have 
$$
 (R')^{p^e} \,\subseteq\, R \,\subseteq\, R'.
$$
Now, if $D$ is a Weil divisor of $\Spec R$ and if $D'$ is its pullback to $\Spec R'$, 
then for some $n>0$, $nD'$ is Cartier and thus, defined by some element $f\in R'$. 
Thus, $p^e n D$ is a Cartier divisor, which is defined by $f^{p^e}\in R$. 
This shows that $X$ is $\QQ$-factorial.
\end{proof}

\begin{Remarks}
 Let us make a couple of comments:
 \begin{enumerate}
  \item
 A toric variety $X=X(\Delta)$ is $\QQ$-factorial if and only if each cone $\sigma\in\Delta$ is simplicial,
 see, for example, \cite[Lemma 14-1-1]{Matsuki}. 
 In particular, normal toric varieties of dimension $n\leq2$  are $\QQ$-factorial.
 \item
 Finite subgroup schemes of $\GG_{m,k}$ are kernels of multiplication-by-$N$ for some $N\geq0$
 and thus, isomorphic to $\bmu_N$.
 Similarly, finite subgroup schemes of $\TT^n=(\GG_{m,k})^n$ are of the form 
 $\prod_{i=1}^n\bmu_{N_i}$ for some $N_i$'s with $N_i\geq0$. 
 In particular, they are diagonalisable.
 \item If $G$ is a subgroup scheme of $\TT^N$ for some $N$ and it acts on $\Spec S$
 freely in codimension one, then we may assume that the $G$-action is linear 
 because $G$ is linearly reductive.
 Since $G$ is diagonalisable, simultaneous diagonalisation implies that $G$ is a subgroup
 scheme of $\TT^n$ and that the $G$-action on $S$ is factors through the usual 
 $\TT^n$-action on $S$.
 \end{enumerate}
\end{Remarks}

\begin{Corollary}
Let $X=\Spec R$ be a normal $n$-dimensional, toric, and $\QQ$-factorial 
singularity over an algebraically closed
field of characteristic $p>0$.
\begin{enumerate}
\item If $R=S^G$ with $S=k[[x_1,...,x_n]]$ and $G$ as in 
Theorem \ref{thm: F-regular}.(2), then $\End_R(S)$ is an NCCR of $R$.
\item For $e$ sufficiently large, $\End_R(R^{1/p^e})$ is an NCCR of $R$.
\item If $n=2$ and if $e$ is sufficiently large, then $\mathrm{FB}_e(X)$ is the minimal 
resolution of singularities of $X$.
\item If $n=3$, if $e$ is sufficiently large, and if $R$ is Gorenstein, then
$\mathrm{FB}_e(X)$ is a crepant resolution of singularities of $X$.
\end{enumerate}
\end{Corollary}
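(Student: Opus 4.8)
The plan is to reduce the statement to the situation already treated in the earlier sections, where $G$ is a finite and linearly reductive group scheme satisfying Setup \ref{Setup}, and then simply to quote the results established there. First I would invoke the characterisation of normal, toric, and $\QQ$-factorial singularities proved immediately above: it furnishes a presentation $R\cong S^G$ with $S=k[[x_1,\dots,x_n]]$ and $G$ a finite subgroup scheme of $\TT^n$ acting through the torus action, freely in codimension one. The essential point---and the only thing that genuinely needs checking---is that such a $G$ is linearly reductive. This follows because a finite subgroup scheme of $\TT^n=(\GG_{m,k})^n$ is of the form $\prod_{i=1}^n\bmu_{N_i}$, as recorded in the Remarks above, hence diagonalisable; and diagonalisable group schemes are linearly reductive in every characteristic, since their finite-dimensional representations are direct sums of characters and therefore manifestly semisimple.

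Once linear reductivity of $G$ is in hand, we are exactly under the hypotheses of Setup \ref{Setup} together with the standing assumption of the previous sections. Assertions (1) and (2) are then immediate from Corollary \ref{cor: Morita}: part (1) of that corollary yields that $\End_R(S)$ is an NCCR of $R$, and part (2) yields that for $e\gg0$ the ring $\End_R(R^{1/p^e})$ is Morita equivalent to $\End_R(S)$ and hence also an NCCR. No further work is required beyond matching notation.

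For assertion (3) I would specialise to $n=2$ and apply Corollary \ref{cor: minres}, which states that for $e$ sufficiently large $\mathrm{FB}_e(\Spec R)\cong\mathrm{Hilb}^G(\Spec S)$ is the minimal resolution of singularities of $X=\Spec R$. For assertion (4) I would specialise to $n=3$ with $R$ Gorenstein and apply Corollary \ref{cor: crep dim 3}, which gives that for $e$ sufficiently large $\mathrm{FB}_e(\Spec R)\cong\mathrm{Hilb}^G(\Spec S)$ is a crepant resolution. In both cases the identification of $X$ with the toric quotient presentation is all that is needed to transport the statements.

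The proof is therefore essentially a dictionary translation, and there is no serious homological or geometric obstacle at this stage, since the heavy lifting was carried out in Theorem \ref{thm: equivalences}, Corollary \ref{cor: Morita}, and the F-blowup comparison of Section \ref{sec: Fblowups}. The one step deserving genuine care is the verification that $G$, a priori only known to be a finite subgroup scheme of a torus, is linearly reductive; everything else is a direct application of results proved earlier. I would also double-check that the ``free in codimension one'' clause supplied by the characterisation is precisely what Setup \ref{Setup} requires, so that Proposition \ref{prop: endoring} and the identification $S\ast G\cong\End_R(S)$ are available---but this is immediate from the statement of the characterisation.
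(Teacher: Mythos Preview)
Your proposal is correct and follows essentially the same approach as the paper: assertions (1) and (2) are deduced from Corollary \ref{cor: Morita}, and assertions (3) and (4) from Corollaries \ref{cor: minres} and \ref{cor: crep dim 3} respectively. You add the explicit verification that a finite subgroup scheme of $\TT^n$ is diagonalisable and hence linearly reductive, which the paper leaves implicit (having recorded it in the preceding Remarks), but otherwise the arguments coincide.
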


\begin{proof}
Assertions (1) and (2) follow from Corollary \ref{cor: Morita}.
Assertions (3) and (4) follow from Corollary \ref{cor: minres} and
Corollary \ref{cor: crep dim 3}, respectively.
\end{proof}

\begin{Remark}
Faber, Muller, and Smith \cite{Faber} proved that if $\Spec R$ is a normal toric singularity in characteristic $p>0$
and if $e$ is sufficiently large, then $\End_R(R^{1/p^e})$ is an NCR.
Recall that this means that the latter ring has finite global dimension, but that it is not necessarily Cohen-Macaulay.
A similar result in characteristic zero was established by \v{S}penko and van den Bergh in \cite{Spenko}. 
\end{Remark}

\subsection{F-regular surface singularities}
\label{subsec: F-regular surfaces}
Let us recall the following result from \cite[Section 11]{LMM}:

\begin{Theorem}
\label{thm: F-regular}
Let $X$ be a normal two-dimensional singularity over an algebraically closed
field of characteristic $p>0$.
Then, the following are equivalent
\begin{enumerate}
\item $X$ is F-regular (resp. Gorenstein and F-regular).
\item $X$ is the quotient singularity by a finite and linearly reductive subgroup scheme
$G$ of $\GL_{2,k}$ (resp. $\SL_{2,k}$).
\end{enumerate}
Moreover, if $p\geq7$, then this is equivalent to
\begin{enumerate}
\setcounter{enumi}{2}
\item $X$ is a log terminal (resp. canonical) singularity.\qed
\end{enumerate}
\end{Theorem}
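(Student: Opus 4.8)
The plan is to prove the two equivalences in stages, carrying the parenthetical Gorenstein/$\SL_2$/canonical variants in parallel throughout, and to isolate the genuinely new input as a single smoothness statement about covers. First I would dispose of $(2)\Rightarrow(1)$, which is now essentially formal. If $X\cong\Spec S^G$ with $S=k[[x_1,x_2]]$ and $G\subseteq\GL_{2,k}$ a finite linearly reductive subgroup scheme acting freely in codimension one, then Theorem \ref{thm: equivalences} shows immediately that $R=S^G$ is strongly F-regular, hence F-regular. In the parenthetical case, if moreover $G\subseteq\SL_{2,k}$, then the preceding Proposition --- the criterion that $R$ is Gorenstein if and only if $G$ is a subgroup scheme of $\SL_{n,k}$ --- gives that $R$ is Gorenstein. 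This settles $(2)\Rightarrow(1)$ in both forms.

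The substance is the converse $(1)\Rightarrow(2)$. Given an F-regular surface singularity $X=\Spec R$, I would produce a finite cover $\pi\colon\Spec S\to\Spec R$, a torsor in codimension one under a finite linearly reductive group scheme $G$ with $R=S^G$, and then show that $S$ is regular. For $G$ one takes the (pro-)linearly reductive local fundamental group scheme $\pi^{\mathrm{lr}}_{\mathrm{loc}}(X)$ classifying connected covers of $X$ that are \'etale in codimension one, and for $S$ the integral closure of $R$ in the corresponding total cover. F-regularity is what guarantees that this group scheme is finite and that $S$ is a well-behaved normal two-dimensional ring with $R=S^G$. The crucial and hardest step is then to show that $S$ is smooth: by maximality of $G$ the singularity $\Spec S$ carries no further nontrivial linearly reductive cover \'etale in codimension one, and one must argue that a two-dimensional normal singularity dominating an F-regular one and admitting no such cover is necessarily regular. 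This is the positive-characteristic analogue of the classical fact that a klt surface singularity with trivial local fundamental group has smooth universal cover, and I expect it to be the main obstacle. Once $S$ is regular, the linearisation already invoked above --- the linearly reductive Chevalley--Todd theorem of \cite{Satriano} --- lets us take the $G$-action linear, so $G\subseteq\GL_{2,k}$; and the preceding Gorenstein Proposition forces $G\subseteq\SL_{2,k}$ exactly when $R$ is Gorenstein, yielding the parenthetical statement.

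Finally I would incorporate condition (3) under the hypothesis $p\geq7$. Here F-regular implies log terminal by \cite{HaraWatanabe}, while the converse implication for surfaces, log terminal $\Rightarrow$ F-regular, holds precisely once the residue characteristic exceeds $5$, by Hara \cite{Hara}; this is the source of the bound $p\geq7$, which is exactly what excludes the wild log terminal surface singularities occurring at the primes $2,3,5$ dividing the orders of the binary polyhedral groups. For the canonical variant one uses that canonical surface singularities are precisely the rational double points, which are Gorenstein, and that for $p\geq7$ these coincide with the Gorenstein F-regular singularities of part (1). Thus (3) matches (1) and (2) in both forms, completing the chain of equivalences.
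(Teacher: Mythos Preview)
The paper does not actually prove this theorem: note the sentence introducing it, ``Let us recall the following result from \cite[Section 11]{LMM}'', and the \texttt{\textbackslash qed} at the end of the statement. So there is no proof in the paper to compare your proposal against; the result is imported wholesale from \cite{LMM}.

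That said, your sketch is a coherent outline of how such a proof could be assembled, and the direction $(2)\Rightarrow(1)$ is indeed immediate from Theorem~\ref{thm: equivalences} together with the Gorenstein criterion at the end of Section~\ref{sec: Fblowups}. For $(1)\Rightarrow(2)$ you correctly isolate the genuine difficulty --- finiteness of the linearly reductive local fundamental group scheme and regularity of the maximal cover --- but you should be aware that this is precisely the content of \cite[Section 11]{LMM}, and the argument there is not short; in particular, your appeal to ``F-regularity is what guarantees that this group scheme is finite'' hides real work. One small correction on part~(3): the implication ``log terminal $\Rightarrow$ F-regular for surfaces when $p\geq 7$'' is not in the reference \cite{Hara} of this paper (which is Hara's paper on F-blowups); it is an earlier result of Hara on the classification of two-dimensional F-regular and F-pure singularities, and is also discussed in \cite{LMM}.
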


By the results of the previous sections, we thus obtain the following.

\begin{Corollary}
Let $X=\Spec R$ be a normal two-dimensional and F-regular singularity over an algebraically closed
field of characteristic $p>0$.
\begin{enumerate}
\item If $R=S^G$ with $S=k[[x_1,x_2]]$ and $G$ as in 
Theorem \ref{thm: F-regular}.(2), then $\End_R(S)$ is an NCCR of $R$.
\item If $e$ is sufficiently large, then $\End_R(R^{1/p^e})$ is an NCCR of $R$.
\item If $e$ is sufficiently large, then $\mathrm{FB}_e(X)$ is the minimal resolution of singularities of $X$.\qed
\end{enumerate}
\end{Corollary}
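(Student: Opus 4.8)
The plan is to observe that each assertion is an immediate consequence of the machinery already assembled, once we recognise $X$ as a linearly reductive quotient singularity. First I would invoke Theorem~\ref{thm: F-regular}: since $X=\Spec R$ is normal, two-dimensional, and F-regular, it is the quotient singularity by a finite and linearly reductive subgroup scheme $G\subseteq\GL_{2,k}$, so that $R\cong S^G$ with $S=k[[x_1,x_2]]$. By the definition of quotient singularity the $G$-action is free in codimension one, so we are precisely in the situation of Setup~\ref{Setup} with $n=2$ and $G$ linearly reductive. This identification is the one structural input; everything else is a matter of citing the results proved above.

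For assertion~(1), with this identification in hand the equivalent conditions of Theorem~\ref{thm: equivalences} hold, condition~(3) being satisfied by hypothesis. Hence Corollary~\ref{cor: Morita}.(1) applies verbatim and gives that $\End_R(S)$ is an NCCR of $R$. For assertion~(2), I would then quote Corollary~\ref{cor: Morita}.(2): for $e$ sufficiently large the ring $\End_R(R^{1/p^e})$ is Morita equivalent to $\End_R(S)$, and since Morita equivalence preserves both finite global dimension and the Cohen--Macaulay property, $\End_R(R^{1/p^e})$ is again an NCCR.

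Assertion~(3) is exactly Corollary~\ref{cor: minres} in the case $n=2$: for $e\gg0$ one has the identification $\mathrm{FB}_e(\Spec R)\cong\mathrm{Hilb}^G(\Spec S)$, and the Hilbert--Chow morphism $\mathrm{Hilb}^G(\Spec S)\to\Spec R$ is the minimal resolution of singularities. Since $X=\Spec R$, this says precisely that $\mathrm{FB}_e(X)$ is the minimal resolution of $X$, as claimed.

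I do not expect a genuine obstacle here: the content of the corollary lies entirely in the earlier sections, and the proof is a bookkeeping exercise in chaining Theorem~\ref{thm: F-regular}, Corollary~\ref{cor: Morita}, and Corollary~\ref{cor: minres}. The only point requiring a moment's care is verifying that the hypotheses of those results --- chiefly that $G$ is finite, linearly reductive, and acts freely in codimension one --- are all guaranteed by Theorem~\ref{thm: F-regular}; once this is confirmed, the three assertions follow immediately, which is why the statement can be closed with \emph{qed}.
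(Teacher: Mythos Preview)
Your proposal is correct and follows exactly the approach the paper intends: the corollary is an immediate consequence of recognising $X$ as an LRQ singularity via Theorem~\ref{thm: F-regular}, then invoking Corollary~\ref{cor: Morita} for assertions (1) and (2) and Corollary~\ref{cor: minres} for assertion (3). This is why the statement carries a bare \emph{qed}; your write-up simply makes explicit the chain of citations that the paper leaves implicit.
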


\begin{Remark}
  Assertion (3) is a theorem of Hara \cite{Hara},
  which we recover here in the context of LRQ singularities and
   $G$-Hilbert schemes.
\end{Remark}

\subsection{Canonical surface singularities}
If $X$ is a canonical surface singularity over an algebraically closed field $k$
of characteristic $p\geq0$, then it is a rational double point.
If $p>0$, then these have been classified by Artin \cite{ArtinRDP}
and they are all of the form $X=\Spec R$ with
$$
  R \,=\, k[[x_1,x_2,x_3]]/(f)
$$
for a suitable polynomial $f=f(x_1,x_2,x_3)$.
If $p\geq7$, then the results recalled in Section \ref{subsec: F-regular surfaces}
show that all canonical surface singularities are F-regular and LRQ
singularities and thus, the results about NCCRs and F-blowups of the
previous sections apply.
However, if $p<7$, then not all canonical surface singularities are F-regular.
The following result is more or less well-known.

\begin{Proposition}\label{prop:can-surf-sing}
Every canonical surface singularity over an algebraically closed
field admits an NCCR.
\end{Proposition}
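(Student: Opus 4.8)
The plan is to deduce the result from the theory of maximal Cohen--Macaulay representations, treating all characteristics at once. A canonical surface singularity $X=\Spec R$ is a rational double point, so $R$ is a complete local Gorenstein normal domain of dimension two with an isolated singularity, of the form $R\cong k[[x_1,x_2,x_3]]/(f)$. As recalled before Proposition~\ref{prop: auslander2}, over such an $R$ a finite module is reflexive if and only if it is maximal Cohen--Macaulay, so the reflexive modules entering Definition~\ref{def-NCCR-intro} are exactly the MCM modules.

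The candidate NCCR is the \emph{Auslander algebra}. First I would invoke that a rational double point is of finite Cohen--Macaulay representation type: up to isomorphism there are only finitely many indecomposable MCM $R$-modules $M_1=R,\dots,M_r$, and I set $M:=\bigoplus_{i=1}^{r}M_i$ and $\Lambda:=\End_R(M)$. Since each $M_i$ is reflexive over the two-dimensional normal ring $R$, the $R$-module $\Lambda$ is reflexive and hence maximal Cohen--Macaulay (the same reasoning as in the proof of Proposition~\ref{prop: endoring}). By Auslander's theorem for isolated singularities of finite CM type in dimension two, $\Lambda$ has finite global dimension; see \cite{Yoshino, Leuschke}. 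A maximal Cohen--Macaulay endomorphism algebra of finite global dimension over a $d$-dimensional ring automatically has global dimension $d$, which is exactly the implication $(4)\Rightarrow(5)$ of Theorem~\ref{thm: equivalences} resting on \cite[Proposition 12.7]{Leuschke}, so $\mathrm{gl.dim}(\Lambda)=2=\dim R$. Therefore $\Lambda$ is an NCCR of $R$, and the proposition reduces to the finiteness of CM representation type.

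For $\mathrm{char}(k)=0$ this finiteness is classical via the McKay correspondence: $X$ is a quotient of the plane by a finite subgroup of $\SL_{2,k}$ and the indecomposable reflexives correspond to the nontrivial irreducible representations \cite{Auslander, AuslanderRS}. For $\mathrm{char}(k)=p\geq 7$, Theorem~\ref{thm: F-regular} identifies $X$ as an F-regular, hence LRQ, singularity, and finiteness follows from Proposition~\ref{prop: auslander2}; in this range one may even quote Corollary~\ref{cor: Morita} to obtain the NCCR $\End_R(S)$ directly. The cases requiring genuine work are $p\in\{2,3,5\}$, where $X$ need not be a linearly reductive quotient. Indeed $\End_R(S)=S\ast G$ then has \emph{infinite} global dimension by Theorem~\ref{thm: equivalences}, so one cannot use $S$ itself and the Auslander algebra becomes essential, and moreover Artin's classification \cite{ArtinRDP} produces additional, wild types of rational double point.

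The hard part is establishing finite CM representation type for these wild rational double points. Here I would appeal to the classification of simple hypersurface singularities in positive characteristic together with the explicit determination of their matrix factorizations, which exhibit only finitely many indecomposable MCM modules for each equation on Artin's list. Granting this input, the homological argument above applies uniformly and produces the desired NCCR. The main obstacle to a fully self-contained proof is thus not the homological algebra, which is characteristic-independent, but precisely the persistence of finite Cohen--Macaulay type for the wild rational double points in characteristics $2$, $3$, and $5$.
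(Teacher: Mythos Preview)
Your approach is the same as the paper's: take $M$ to be an additive generator of the category of MCM $R$-modules and show $\End_R(M)$ is an NCCR. The paper's proof is three lines: Artin--Verdier \cite{ArtinVerdier} proves directly, in \emph{every} characteristic and by a uniform geometric argument via the minimal resolution, that a rational double point has only finitely many indecomposable reflexive modules; then \cite[Theorem 6]{LeuschkeEndomorphismRings} gives immediately that $\End_R(M)$ is an NCCR.

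What you identify as the ``main obstacle'' --- finite CM type for the wild rational double points in characteristics $2,3,5$ --- is therefore not an obstacle at all: the Artin--Verdier argument is characteristic-free and does not pass through matrix factorizations or Artin's equation-by-equation classification. Your case split by characteristic, the detour through Theorem~\ref{thm: F-regular} and Proposition~\ref{prop: auslander2} for $p\ge 7$, and the appeal to unspecified matrix-factorization computations for small $p$ are all unnecessary. Likewise, your decomposition of the NCCR conclusion into ``MCM $+$ finite global dimension $+$ \cite[Proposition 12.7]{Leuschke}'' is correct but is packaged in one stroke by \cite{LeuschkeEndomorphismRings}. The proposal is not wrong, but it inflates a two-citation proof into a program whose ``hard part'' was settled in 1985.
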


\begin{proof}
Let $\Spec R$ be a canonical surface singularity over an algebraically closed field.
By \cite{ArtinVerdier}, there are only finitely many  indecomposable maximal Cohen-Macaulay modules over $R$ 
up to isomorphism. 
Let $M$ be the direct sum of all of them.  
By \cite[Theorem 6]{LeuschkeEndomorphismRings}, $\End_R(M)$ is an NCCR.
\end{proof}

\begin{Remarks}
If $0<p\leq5$, then not all canonical surface singularities are LRQ singularities.
\begin{enumerate}
\item If $p=5$, then the singularities $E_8^0$ and $E_8^1$ (notation as in \cite{ArtinRDP}) 
are quotient singularities 
 $$
  R\,=\,S^G \,\subseteq\, S\,=\,k[[x_1,x_2]]
$$
with $G$ isomorphic to $\balpha_5$ and $\C_5=\ZZ/5\ZZ$, respectively.
This is in contrast to the group that is usually assigned to $E_8$-singularities if $p=0$ or $p\geq7$, 
namely the binary icosahedral group, which is a non-abelian group of order 120.
We have $S*G\cong\End_R(S)$ by Proposition \ref{prop: endoring}, but this ring does not have 
finite global dimension by Theorem \ref{thm: equivalences}.
Thus, although these singularities admit NCCRs, they are not given by $\End_R(S)$.
\item If $p=3$, then the singularity $E_8^0$ is not a quotient singularity
by \cite[Theorem 1.12]{LMM2}.
\item If $X$ is a canonical surface singularity, then $\mathrm{FB}_e(X)$
has only rational singularities and it is dominated by the minimal resolution of $X$ 
by \cite[Proposition 3.2]{HSY}.
However, it is not necessarily true that $\mathrm{FB}_e(X)$ is a resolution of singularities even if 
$e$ is sufficiently large, see \cite[Theorem 1.1]{HSY} for
a couple of examples, which include the $E_8^0$-singularity in characteristic $p=5$.
This has to do with the fact that for every $e\geq1$, there exists an indecomposable maximal Cohen-Macaulay module of $R$
that is not a summand of $R^{1/p^e}$.
Thus, an NCCR $\End_R(M)$ of such a singularity, which exists by Proposition \ref{prop:can-surf-sing}, 
is not of the form $\End_R(R^{1/p^e})$.
\end{enumerate}
\end{Remarks}

\end{document}